\newtheorem{definition}{Definition}
\newtheorem{theorem}[definition]{Theorem}
\newtheorem{proposition}[definition]{Proposition}
\newtheorem{lemma}[definition]{Lemma}
\newtheorem{assumption}[definition]{Assumption}
\newtheorem{remark}[definition]{Remark}
\newtheorem{notation}[definition]{Notation}
\newcommand{\dl}{\ensuremath{\mathrm{d}}}
\begin{document}

\title[Stochastic fiber dynamics in a spatially semi-discrete setting]{Stochastic dynamics for inextensible fibers \\ in a spatially semi-discrete setting}

\author[F. Lindner]{Felix Lindner$^1$}
\author[N. Marheineke]{Nicole Marheineke$^2$}
\author[H. Stroot]{Holger Stroot$^{1,3}$}
\author[A. Vibe]{Alexander Vibe$^2$}
\author[R. Wegener]{Raimund Wegener$^3$}

\date{\today\\
$^1$ TU Kaiserslautern, Fachbereich Mathematik, D-67653 Kaiserslautern, Germany\\
$^2$ FAU Erlangen-N\"urnberg, Lehrstuhl Angewandte Mathematik I, Cauerstr.\ 11, D-91058 Erlangen, Germany\\
$^3$ Fraunhofer ITWM, Fraunhofer Platz 1, D-67663 Kaiserslautern, Germany}

\begin{abstract}
We investigate a spatially discrete surrogate model for the dynamics of a slender, elastic, inextensible fiber in turbulent flows. Deduced from a continuous space-time beam model for which no solution theory is available, it consists of a high-dimensional second order stochastic differential equation in time with a nonlinear algebraic constraint and an associated Lagrange multiplier term. We establish a suitable framework for the rigorous formulation and analysis of the semi-discrete model and prove existence and uniqueness of a global strong solution. The proof is based on an explicit representation of the Lagrange multiplier and on the observation that the obtained explicit drift term in the equation satisfies a one-sided linear growth condition on the constraint manifold. The theoretical analysis is complemented by numerical studies concerning the time discretization of our model. The performance of implicit Euler-type methods can be improved when using the explicit representation of the Lagrange multiplier to compute refined initial estimates for the Newton method applied in each time step.
\end{abstract}

\maketitle

\quad\\
\textsc{AMS-Classification.} Primary 60H10, 74K10, 74Hxx; Secondary 58J65, 65C30. \\
\textsc{Keywords.} Stochastic elastic beam dynamics; stochastic differential algebraic equations; manifold-valued stochastic differential equations; nonlinear constraint; global solution theory

\section{Introduction}

The stochastic dynamics of thin long elastic fibers is of interest in various applications ranging from biomolecular science to paper and technical textile manufacturing \cite{mesirov:b:1996, pearson:b:1985, wegener:b:2015}. In the slender body theory a fiber can be asymptotically described by an arc-length parameterized, time-dependent random curve $\mathbf{r}$ representing its centerline. Its dynamics due to acting deterministic and stochastic forces, such as gravity, friction, turbulent aero- or hydrodynamics, can be modeled by a system of nonlinear partial differential equations driven by a multiplicative vector-valued space-time white noise $\boldsymbol{\xi}$, \cite{marheineke:p:2011}
\begin{equation}\label{eq:intro}
\begin{aligned}
\partial_t((\rho A) \partial_{t} \mathbf{r})&= \partial_s (\lambda \partial_s \mathbf{r}) - \partial_{ss}((EI) \partial_{ss} \mathbf{r}) + \mathbf{f}(\mathbf{r}, \partial_t \mathbf{r}, \partial_s \mathbf{r}, t)+ \mathbf{A}(\mathbf{r}, \partial_t \mathbf{r},\partial_s \mathbf{r}, t)  \,\boldsymbol{\xi}, \\   \|\partial_s \mathbf{r}(s, t)\| &= 1.
\end{aligned}
\end{equation}
The arc-length constraint enforces local inextensibility and hence the global conservation of length. It turns the scalar-valued inner traction $\lambda$ to an unknown random parameter, i.e., Lagrange multiplier. The system for $(\mathbf{r}, \lambda)$ has a beam-type character due to inertia (line weight $(\rho A)$) with an elliptic regularization coming from the bending stiffness $(EI)$. Its deterministic version ($\mathbf{A}\equiv\mathbf{0}$) can be viewed as a reformulation of the Kirchhoff-Love equations \cite{landau:b:1970} that describe the asymptotic limit model of an elastic Euler-Bernoulli rod as the slenderness parameter (ratio between fiber diameter and length) and the Mach number (ratio between fiber velocity and typical speed of sound) approach zero \cite{baus:p:2015}. For rigorous derivations of such inextensible Kirchhoff beam models from three-dimensional hyper-elasticity we refer to, for example, \cite{coleman:p:1993, mora:p:2003}. 

Fast and accurate numerical simulations are strongly required in nonwoven manufacturing, for the exploration, design and optimization of turbulent fiber lay-down processes and their resulting fabric quality, \cite{bonilla:p:2007, klar:p:2009, marheineke:p:2011}. So far, the used approaches were mainly addressed to high-speed performance without any theoretical results on convergence or length conservation. As far as we know, there exist no analytical results for the constrained stochastic system \eqref{eq:intro}, neither concerning the convergence of numerical schemes nor concerning the existence and uniqueness of a solution. The solvability of extensible stochastic beam equations without constraint is studied in, among others, \cite{baur:p:2013, brzezniak:p:2005, daprato:b:2014}, see also the references therein. The solvability of the deterministic counterpart of the constrained system \eqref{eq:intro} (and variants thereof) is investigated in \cite{dziuk:p:2002, grothaus:p:2015, oelz:p:2011, preston:p:2011}. Deterministic elastic flows of constrained curves in different model variants are also a topic of recent numerical investigations, see \cite{barrett:p:2011, bartels:p:2013, deckelnick:p:2009, grothaus:p:2015}. Considering a global length constraint, an error analysis for a spatially semi-discrete scheme was performed in \cite{deckelnick:p:2009}, a fully implicit finite element method with equidistribution properties was explored in \cite{barrett:p:2011}. As for the deterministic version to \eqref{eq:intro}, the nonlinear pointwise constraint of the local length preservation was handled by a linearization around a previous solution in each time step which led to a sequence of linear saddle point problems in space in \cite{bartels:pp:2014, grothaus:p:2015}. A rigorous convergence analysis for the temporally semi-discrete setting can be found in \cite{grothaus:p:2015}.

The focus of this paper is a better understanding for the constrained stochastic partial differential equation \eqref{eq:intro} and its numerical behavior by a thorough analytical investigation of the corresponding spatially semi-discretized formulation, complemented by numerical studies concerning its time discretization. In particular, we provide a suitable framework for the rigorous interpretation of the spatially discrete model. It is formally derived by applying a finite volume approach with constant cell size $\Delta s$ and describes the discrete counterpart to the fiber curve $\mathbf r(\cdot,t)$ via a finite sequence of associated fiber points $r(t)=(\mathbf r_i(t))_{i=1,\ldots,N}$ in terms of a polygon line for each time $t\geq 0$. The model can be interpreted as an It\^o-type high-dimensional stochastic differential equation (SDE) of the form
\begin{equation}\label{eq:intro_SDE}
\begin{aligned}
 \dl r &= v\,\dl t,\\
 \dl v &= a(t,r,v)\,\dl t + B(t,r,v)\,\dl w(t) + \nabla g(r) \,\dl \mu(t),\\
 g(r(t)) &= 0, 
 \end{aligned}
\end{equation}
with velocity $v$, driving vector-valued Brownian motion $w$, involved drift and diffusion functions $a$ and $B$ as well as a nonlinear constraint function $g$. The process $\mu$ is a vector-valued continuous semimartingale which serves as a Lagrange multiplier to the inextensibility constraint $g(r(t))=0$, so that, formally, the time derivative of $\mu(t)$ corresponds to $\lambda(\cdot,t)$ in \eqref{eq:intro}, cf.\ Sec.~\ref{sec:model} for details. The semi-discrete system \eqref{eq:intro_SDE} should be considered as a simplified surrogate model for the full dynamical system \eqref{eq:intro} which, though simplified, inherits characteristic features and difficulties of the full model, in particular the presence of a nonlinear algebraic constraint and a corresponding Lagrange multiplier term, and at the same time allows for a rigorous analysis in terms of It\^o calculus. Note that the constraint gives rise to a submanifold $\mathcal M$ of the state space, so that \eqref{eq:intro_SDE} is in fact a manifold-valued SDE. 

Constrained or manifold-valued SDEs similar to \eqref{eq:intro_SDE} have been considered in the context of molecular dynamics, see, for example, \cite{ciccotti:p:2006,kallemov:p:2011,lelievre:p:2012} and the references therein. It is known that the Lagrange multiplier $\mu$ can be represented explicitly in terms of $r,v,a,B,w$ and $g$, yielding the existence of a local solution, cf.\ the solution theory in \cite{hsu:b:2002}. However, to the best of our knowledge, the question whether there exists a global (strong) solution to SDEs of the type \eqref{eq:intro_SDE}, i.e., whether the local solution has infinite lifetime, has not been treated yet. This question is non-trivial since, in its explicit formulation, the drift term in the equation shows a quite involved quadratic dependence on the velocity $v$, cf.\ Sec.~\ref{sec:existence_and_uniqueness}. In this paper, we provide a rigorous proof of the existence and uniqueness of a global strong solution to \eqref{eq:intro_SDE}. 
It is based on a detailed analysis of the drift coefficient of the explicitly reformulated equation, which turns out to satisfy a one-sided linear growth property on the constraint manifold $\mathcal M$.
In combination with a  Gronwall-type argument, this is used to verify that the lifetime of the local solution is infinite with probability one.

Insights from the theoretical analysis allow the improvement of the numerical treatment of \eqref{eq:intro}. 
We remark that rigorous strong convergence results for constraint-preserving time discretizations of SDEs of the type \eqref{eq:intro_SDE} are not available in the literature, but various numerical schemes have been proposed, see, for example, \cite{ciccotti:p:2006,kallemov:p:2011,lelievre:p:2012} and compare also \cite{ciccotti:p:2008,lelievre:p:2008}. 
Recently, strong convergence results for numerical schemes for a class of explicitly given SDEs with a conserved quantity have been derived in \cite{hong:p:2011,zhou:p:2016} (compare also \cite{milstein:p:2002}), but the assumptions on the coefficients therein are not satisfied by the coefficients appearing in the explicit reformulation of \eqref{eq:intro_SDE}.
In this paper, we investigate numerically the strong convergence behavior of an implicit Euler scheme and an explicit projection-based variant and discuss their applicability to the turbulence-driven fiber dynamics regarding accuracy and computational effort. We show that the performance of the implicit Euler-type methods can be speeded up when using the explicit representation of the Lagrange multiplier to compute refined initial estimates for the Newton method in each time step --in the spirit of a predictor-corrector scheme, cf.\ WIGGLE-algorithm in molecular dynamics \cite{lee:p:2005}. In that context we refer to \cite{griebel:b:2007} and references therein for symplectic schemes for constrained (stochastic) Hamiltonians, e.g., RATTLE and SHAKE-algorithms.

The paper is structured as follows. Starting with the continuous space-time model for the stochastic fiber dynamics and its spatial semi-discretization, we embed the resulting spatially discrete model into the framework of manifold-valued It\^o-type SDEs in Sec.~\ref{sec:model}.  We present the global existence and uniqueness result for the corresponding class of SDEs with holonomic constraint in Sec.~\ref{sec:existence_and_uniqueness} and discuss the implications for the numerical handling of our fiber model in Sec.~\ref{sec:numerics}.

\section{Model for the Stochastic Fiber Dynamics}
\label{sec:model}
\setcounter{equation}{0}

In this section we present a stochastic inextensible beam model for the dynamics of a slender elastic inertial fiber immersed in a turbulent flow field. Following the works \cite{marheineke:p:2006, marheineke:p:2011}, the turbulent effects are particularly incorporated by means of a stochastic force model in terms of multiplicative space-time white noise. The applied spatial semi-discretization leads to a high-dimensional system of It\^o-type SDEs with holonomic constraint.

\subsection{Turbulence-driven fiber -- continuous space-time model}
The characteristic feature of a fiber is its long slender geometry. Asymptotically, an elastic fiber can be modeled as an inextensible Kirchhoff beam, because extension and shear are negligibly small compared to bending. Torsion plays also no role in case that one fiber end is free. In the slenderbody theory the fiber is represented by an arc-length parameterized time-dependent curve $\mathbf{r}:[0,\ell]\times\mathbb{R}_0^+\to \mathbb{R}^d,\;(s,t)\mapsto \mathbf r(s,t)$, e.g., its centerline with fiber length $\ell$. If randomness is involved, it becomes a random field $\mathbf{r}:\Omega\times[0,\ell]\times\mathbb{R}_0^+\to \mathbb{R}^d,\;(\omega,s,t)\mapsto \mathbf r(\omega,s,t)$, where $d$ denotes the space dimension, $d\in\{2,3\}$. We consider an homogeneous inertial fiber in a turbulent flow field under gravity and describe its dynamics by help of a constrained stochastic partial differential system with multiplicative space-time white noise, $\boldsymbol{\xi}(s,t)\in \mathbb{R}^d$, in the spirit of \cite{marheineke:p:2011}. In dimensionless form the stochastic fiber model for position $\mathbf{r}$, velocity $\mathbf{v}$ and inner traction $\lambda$  is given by
\begin{align}\label{eq:intro_fullsystem} \nonumber
\partial_t \mathbf{r}&=\mathbf{v},\\\nonumber
\partial_t \mathbf{v}&=  \partial_s(\lambda \partial_s\mathbf{r})-\alpha^{-2} \partial_{ssss}\mathbf{r}+\mathrm{Fr}^{-2} \mathbf{e}_{\mathrm g}+\mathrm{Dr}^{-2}\mathbf{C}(\mathbf{r},\partial_s\mathbf{r},t)\,[(\mathbf{u}(\mathbf{r},t)-\mathbf{v})+\beta \mathbf{D}(\mathbf{r},\mathbf{v},\partial_s\mathbf{r},t) \,\boldsymbol{\xi}],\\
\|\partial_s\mathbf{r}\|&=1,
\end{align}
supplemented with appropriate initial and boundary conditions. We particularly use
\begin{subequations}\label{eq:intro_bc}
\begin{align}\label{eq:intro_bc_fs}
 \mathbf{r}(0,t)=\mathbf{\hat r},\quad \partial_s\mathbf{r}(0,t)=\boldsymbol{\hat \tau}, \,\, \|\boldsymbol{\hat \tau}\|=1, \qquad \lambda(\ell,t)=0,\quad \partial_{ss}\mathbf{r}(\ell,t)=\mathbf{0}, \quad \partial_{sss}\mathbf{r}(\ell,t)=\mathbf{0}
\end{align}
in case of a fiber clamped at one end ($s=0$) and stress-free at the other end ($s=\ell$) as well as
\begin{align}\label{eq:intro_bc_ff}
 \lambda(s,t)=0, \quad \partial_{ss}\mathbf{r}(s,t)=\mathbf{0}, \quad \partial_{sss}\mathbf{r}(s,t)=\mathbf{0}, \quad s\in\{0,\ell\}
\end{align}
\end{subequations}
in case of a free moving fiber. Here, the inner traction $\lambda$ is a  real-valued (generalized) random field on $[0,\ell]\times\mathbb R_0^+$ that acts as a Lagrange multiplier to the nonlinear pointwise constraint. The latter is expressed in the Euclidean norm $\|\cdot\|$ and ensures the arc-length parameterization for all times. It enforces the local inextensibility and hence the global conservation of length. The fiber dynamics is driven by the inner forces due to traction and bending and by the acting outer forces due to gravity and drag. The drag forces arising in a turbulent flow are modeled as superposition of a deterministic and a stochastic part, \cite{marheineke:p:2006, marheineke:p:2011}. Whereas the deterministic force is determined by the relative velocity between the mean flow velocity $\mathbf{u}$ (evaluated at $(\mathbf r(s,t),t)$) and the fiber velocity, the stochastic force takes into account the impact of the turbulent fluctuations. It is modeled as three-dimensional space-time white noise $\boldsymbol{\xi}$ with flow-dependent amplitude $\mathbf{D}$. We use a linear (Stokes-type) drag model where the $\mathbb{R}^{d\times d}$-valued drag operators $\mathbf{C}$ and $\mathbf{D}$ depend explicitly on the fiber velocity and/or orientation; the further dependence on the flow field enters via the evaluation of the flow quantities at $(\mathbf{r},t)$, cf.,\ $\mathbf{u}(\mathbf{r},t)$. The stochastic fiber system \eqref{eq:intro_fullsystem} is characterized by four dimensionless numbers: the bending number $\alpha$ (ratio of inertial and bending forces), the Froude number $\mathrm{Fr}$ (ratio of inertial and gravitational forces with gravitational direction $\mathbf{e}_{\mathrm g}$, $\|\mathbf{e}_{\mathrm g}\|=1$), the drag number $\mathrm{Dr}$ (ratio of inertial and mean drag forces) and the turbulent fluctuation number $\beta$. 

\begin{assumption}[Drag force model]\label{ass:1} For the subsequent investigations we assume that the flow velocity $\mathbf u:\mathbb{R}^d\times \mathbb{R}_0^+ \to\mathbb{R}^d$ is continuously differentiable with at most linear growth in the space argument. Moreover, the drag operators are continuously differentiable and bounded, i.e., $\mathbf{C}:\mathbb{R}^{2d} \times \mathbb{R}_0^+ \to \mathbb{R}^{d\times d}$ and $\mathbf{D}:\mathbb R^{3d}\times \mathbb{R}_0^+\to \mathbb{R}^{d\times d}$. Note that the consideration of nonlinear drag models as, e.g., given in \cite{marheineke:p:2011} is possible, presupposing respective regularity and growth conditions.
\end{assumption}

\subsection{Spatial semi-discretization}
\label{sec:semi_disc}

The fiber model \eqref{eq:intro_fullsystem} is a wave-like system with elliptic regularization. For the spatial discretization of the deterministic version different approaches can be found in literature, such as, e.g., geometric Lagrangian methods \cite{langer:p:1996}, finite element schemes \cite{barrett:p:2011, bartels:p:2013, grothaus:p:2015} or finite volume approaches \cite{wegener:b:2015}. We apply a finite volume method in combination with a finite difference approximation for the constraint. The usage of a staggered grid allows particularly for small discretization stencils. In the deterministic case, it is a conservative first-order scheme. However, note that the derivation of the spatially semi-discrete stochastic system is here purely formal, a rigorous interpretation follows in Sec.~\ref{sec:interpretation_as_SDE}.

We consider a finite volume discretization that is based on a conforming partition of $[0,\ell]$ into subintervals (control cells) $I_i$ of length $\Delta s$.  The partition is identified with the sequence of nodes $\{s_i\}$. The idea is to formally integrate the evolution equations in \eqref{eq:intro_fullsystem} over the control cells $I_i=[s_{i-1/2},s_{i+1/2}]$, $s_{i\pm 1/2}=s_i\pm \Delta s/2$ for $i=1,...,N$, and to set up a stochastic differential system in time for the cell averages $\boldsymbol{\varphi}_i(t)=\int_{I_i} \boldsymbol{\varphi}(s,t) \dl s/\Delta s$ of the unknowns $\boldsymbol{\varphi}\in\{\mathbf{r}, \mathbf{v}\}$. In this way the fiber position and velocity are assigned to the cell nodes, $\mathbf{r}_i(t)$, $\mathbf{v}_i(t)$, whereas we assign the inner traction --and consequently also the constraint-- to the cell edges, $\lambda_{i\pm1/2}(t)=\lambda(s_{i\pm1/2},t)$. Proceeding from the integral equations
\begin{equation}\label{eq:semidiscretization3} 
\begin{aligned}
\frac{\dl}{\dl t} \mathbf{r}_i(t) &= \mathbf{v}_i(t), \\
\frac{\dl}{\dl t}\mathbf{v}_i(t) &= \frac{1}{\Delta s} \left( \boldsymbol{\phi}(s_{i+1/2},t) - \boldsymbol{\phi}(s_{i-1/2},t) + \int_{I_i}\mathbf{f}(s,t) \,\dl s +  \int_{I_i} \mathbf{A}(s,t)  \boldsymbol{\xi}(s,t)\,\dl s \right),\\ 
\|\partial_s\mathbf{r}(s_{i-1/2},t)\|&=1,
\end{aligned}
\end{equation}
where we abbreviate the flux function by $\boldsymbol{\phi}=\lambda \partial_s \mathbf{r} - \alpha^{-2}\partial_{sss} \mathbf{r}$, the deterministic source terms by $\mathbf{f}=\mathrm{Fr}^{-2} \mathbf{e}_{\mathrm g}+\mathrm{Dr}^{-2}\mathbf{C}(\mathbf{r},\partial_s\mathbf{r},.)(\mathbf{u}(\mathbf{r},.)-\mathbf{v})$ and the noise amplitude by $\mathbf{A}=\mathrm{Dr}^{-2}\beta \mathbf{C}(\mathbf{r},\partial_s\mathbf{r},.)\mathbf{D}(\mathbf{r},\mathbf{v},\partial_s\mathbf{r},.)$, we evaluate the integrals by means of trapezoidal quadrature rules,
\begin{align*}
\int_{I_i}\mathbf{f}(s,t) \,\dl s  &\approx \frac{1}{2}(\mathbf{f}(s_{i+1/2},t)+\mathbf{f}(s_{i-1/2},t))\Delta s, \\
 \int_{I_i} \mathbf{A}(s,t)\boldsymbol{\xi}(s,t) \, \dl s &\approx \frac{1}{2}(\mathbf{A}(s_{i+1/2},t) +\mathbf{A}(s_{i-1/2},t))   \int_{I_i} \boldsymbol{\xi}(s,t) \, \dl s.
\end{align*}
Moreover, we approximate all function values of $\mathbf r$, $\mathbf v$ as well as $\partial_s \mathbf{r}$, $\partial_{ss}\mathbf{r}$, $\partial_{sss} \mathbf{r}$ occurring at the cell edges $s_{i\pm1/2}$ by a linear interpolation over the neighboring cells and first order finite differences, respectively, i.e.,
\begin{align*}
\boldsymbol{\varphi}(s_{i-1/2},t)&\approx(\boldsymbol{\varphi}_{i}(t)+\boldsymbol{\varphi}_{i-1}(t))/2,\qquad \boldsymbol{\varphi}\in\{\mathbf{r},\mathbf{v}\},\\
\partial_s \mathbf{r}(s_{i-1/2},t)&\approx (\mathbf{r}_{i}(t)-\mathbf{r}_{i-1}(t))(\Delta s)^{-1},\\
\partial_{ss} \mathbf{r}(s_{i-1/2},t)&\approx (\mathbf{r}_{i+1}(t)-\mathbf{r}_{i}(t)-\mathbf{r}_{i-1}(t)+\mathbf{r}_{i-2}(t))(\Delta s)^{-2},\\
\partial_{sss} \mathbf{r}(s_{i-1/2},t)&\approx (\mathbf{r}_{i+1}(t)-3\mathbf{r}_{i}(t)+3\mathbf{r}_{i-1}(t)-\mathbf{r}_{i-2}(t))(\Delta s)^{-3}.
\end{align*}
Consequently, plugging the discretization stencils into the integral equations yields a semi-discrete system for $\mathbf{r}_i$, $\mathbf{v}_i$ and $\lambda_{i-1/2}$. The traction $\lambda_{i-1/2}(t)$ appearing in the flux term acts particularly as Lagrange multiplier to the discretized inextensibility constraint 
\begin{align*}
\|\mathbf{r}_{i}(t)-\mathbf{r}_{i-1}(t)\|=\Delta s.
\end{align*}
Due to the constraint, the spatially discrete fiber becomes a polygon line with a fixed geometrical spacing for the fiber points. In the sequel we denote the used approximations of the function values at the cell edges by the index $_{i\pm 1/2}$, e.g., $\mathbf{A}_{i\pm1/2}(t)\approx \mathbf{A}(s_{i\pm 1/2},t)$. 
 
To incorporate the boundary conditions \eqref{eq:intro_bc} into the scheme, we introduce ghost points, i.e., artificial points which are not governed by the dynamical system~\eqref{eq:intro_fullsystem}. This way we do not have to adapt the discretization stencils. The spatial grid for a one-sided clamped fiber \eqref{eq:intro_bc_fs} is visualized in Fig.~\ref{fig:app_disc}. We realize the clamped boundary condition of a fixed position $\mathbf{\hat{r}}$ and tangent $\boldsymbol{\hat\tau}$ at $s=0$ by setting the node $s_1=1.5\Delta s$, where $\Delta s=\ell/(N+1)$, and introducing the additional nodes $s_{-1}=-0.5\Delta s$ and $s_0 = 0.5\Delta s$. The associated fiber points $\mathbf{r}_{i}$, $i=-1,0$ are then given algebraically at every time $t\geq0$, they are coupled with the dynamical system via the constraint for the unknown traction $\lambda_{1/2}$,
\begin{align*}
 \mathbf{r}_{-1}+\mathbf{r}_0 &= 2\mathbf{\hat{r}},\qquad
 \mathbf{r}_0-\mathbf{r}_{-1} = \Delta s\boldsymbol{\hat{\tau}}, \qquad \|\mathbf{r}_1-\mathbf{r}_0\|=\Delta s.
\end{align*}
In particular, $\mathbf r_0=\mathbf{\hat{r}}+\Delta s \boldsymbol{\hat{\tau}}/2$ is constant and hence considered as a ghost point although $s_0\in[0,\ell]$. For the realization of a stress-free boundary at $s=\ell$, we set $s_N=\ell-0.5\Delta s$ and add $s_{N+i} = \ell+(i-0.5)\Delta s$, $i=1,2$. According to the discretization stencils the corresponding fiber points and traction fulfill
\begin{align*}
 \mathbf{r}_{N+2}-3\mathbf{r}_{N+1}+3\mathbf{r}_{N}-\mathbf{r}_{N-1} &= \mathbf{0},\qquad
 \mathbf{r}_{N+2}-\mathbf{r}_{N+1}-\mathbf{r}_{N}+\mathbf{r}_{N-1} = \mathbf{0}, \qquad \lambda_{N+1/2}=0.
\end{align*}
Note that this approach also ensures that the additional points fulfill the length constraint. In case of a free moving fiber \eqref{eq:intro_bc_ff} the handling of the stress-free boundary conditions for both ends is straightforward.

In the paper we will focus on the analysis of the one-sided clamped fiber as this case covers the difficulties of both types of boundary conditions (clamped and stress-free). 

\begin{figure}[h!]
 \includegraphics[scale=0.9]{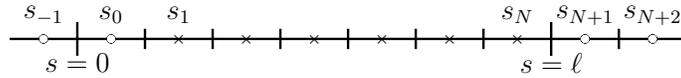}
 \caption{\small Spatial grid for semi-discretization of one-sided clamped fiber. Crosses mark the dynamical grid points, whereas circles label the ghost points due to the realization of the boundary conditions \eqref{eq:intro_bc_fs}.}
\label{fig:app_disc}
\end{figure}

\subsection{Interpretation as It\^o-type SDE}
\label{sec:interpretation_as_SDE}

So far the deduced semi-discrete system is purely formal. In the momentum balance, neither the evaluation of the space-time white noise $\boldsymbol\xi$ at fixed points $(s,t)\in[0,\ell]\times\mathbb R_0^+$ is defined nor the rigorous meaning of the Lagrange multiplier is clear. We show that the model can be interpreted as a constrained system of It\^o-type stochastic differential equations which will serve us as a simplified surrogate for the continuous space-time model \eqref{eq:intro_fullsystem}.  To establish a suitable framework for its analytical investigation in Sec.~\ref{sec:existence_and_uniqueness} we recall some basics from stochastic analysis.

Throughout the paper, all random objects are supposed to be defined on a common, sufficiently rich, complete probability space $(\Omega,\mathcal F,\mathbb P)$. By $\mathcal B_0([0,\ell]\times\mathbb R_0^+)$ we denote the system of all Borel sets $A\subset\mathcal B([0,\ell]\times\mathbb R_0^+)$ with finite Lebesgue mass $|A|=\int_{[0,\ell]\times\mathbb{R}^+_0}\mathbbm{1}_A(s,t)\,\dl(s,t)$. 

\begin{definition}[Gaussian white noise \cite{walsh:b:1986}] 
A $\mathbb R^d$-valued \emph{Gaussian white noise} on $[0,\ell]\times\mathbb R_0^+$, with Lebesgue measure as reference measure, is a mapping $\boldsymbol\xi:\mathcal B_0([0,\ell]\times\mathbb R_0^+)\to L^2(\Omega,\mathcal F,\mathbb P;\mathbb R^d)$ such that
\begin{itemize}
\item[(i)] each $\boldsymbol\xi(A)$ is Gaussian distributed with mean $\mathbf 0\in\mathbb R^d$ and covariance matrix $|A|\,\mathbf I_d\in\mathbb R^{d\times d}$ ($\mathbf I_d$ is the identity matrix);
\item[(ii)] if $A_1\cap\ldots\cap A_n=\emptyset$, then $\boldsymbol\xi(A_1),\ldots,\boldsymbol\xi(A_n)$ are independent and $\boldsymbol\xi\big(\bigcup_{i=1}^n A_i\big)=\sum_{i=1}^n\boldsymbol\xi(A_i)$.
\end{itemize}
\end{definition}

Understanding $\boldsymbol\xi$ in \eqref{eq:intro_fullsystem} and \eqref{eq:semidiscretization3} in this way, we do not deal with a regular random field on $[0,\ell]\times\mathbb R_0^+$ but with a generalized random field or a random measure. An It\^o-type stochastic integral $\int_{[0,\ell]\times\mathbb R^+_0} \mathbf F(s,t)\,\boldsymbol\xi(\dl s,\dl t)$ with respect to this random measure can be defined for predictable $\mathbb R^{d\times d}$-valued integrands $\mathbf F$ satisfying a suitable integrability assumption, see, e.g.,~\cite{walsh:b:1986} for details. Here and below, predictability and adaptedness refer to the underlying filtration $(\mathcal F_t)_{t\geq0}$ given by
\begin{equation*}
\mathcal F_t:=\sigma\big(\boldsymbol\xi(B\times[0,t]):\,B\in\mathcal B([0,\ell])\big),\quad t\geq0.
\end{equation*} 
Thus, when formally integrating the semi-discrete momentum balance with respect to the time variable $t$ over a finite interval $[0,\tau]$, we can interpret the expression
\begin{subequations} \label{eq:interpretation_as_SDE_1}
\begin{align} \label{eq:interpretation_as_SDE_1a}
\int_0^\tau\Big(\frac{1}{2\Delta s}\big(\mathbf{A}_{i+1/2}(t) +\mathbf{A}_{i-1/2}(t)\big)  \int_{I_i} \boldsymbol{\xi}(s,t) \, \dl s\Big)\dl t
\end{align} 
as a stochastic integral $\int_{I_i\times[0,\tau]}(2\Delta s)^{-1}(\mathbf{A}_{i+1/2}(t) +\mathbf{A}_{i-1/2}(t))\,   \boldsymbol{\xi}(\dl s,\dl t)$ with respect to $\boldsymbol\xi$, given that $\mathbf{A}_{i\pm1/2}$ are, for example, adapted and continuous. Since $\mathbf{A}_{i\pm1/2}$ do not depend on $s$, this simplifies to an It\^o-type stochastic integral $\int_0^\tau(2\Delta s)^{-1}(\mathbf{A}_{i+1/2}(t) +\mathbf{A}_{i-1/2}(t))\,   \boldsymbol{\xi}(I_i,\dl t) $ w.r.t.\ the random measure $\mathcal B_0(\mathbb R^+_0)\ni B\mapsto \boldsymbol\xi(I_i\times B)\in L^2(\Omega,\mathcal F,\mathbb P;\mathbb R^d)$ on $\mathbb R^+_0$. Introducing $\mathbb R^d$-valued standard Wiener processes $\mathbf w_i=(\mathbf w_i(t))_{t\geq0}$, $i=1,\ldots,N$, by setting
\begin{align}\label{eq:w_i}
\mathbf w_i(t):=\frac{1}{\sqrt{\Delta s}}\,\boldsymbol\xi(I_i\times[0,t]),\quad t\geq0,
\end{align}
we have the identity
\begin{align}\label{eq:interpretation_as_SDE_1b}
\int_0^\tau\frac{1}{2\Delta s}\big(\mathbf{A}_{i+1/2}(t) +\mathbf{A}_{i-1/2}(t)\big)\,   \boldsymbol{\xi}(I_i,\dl t)
= \int_0^\tau\frac{1}{2\sqrt{\Delta s}}\big(\mathbf{A}_{i+1/2}(t) +\mathbf{A}_{i-1/2}(t)\big)\,\dl\mathbf w_i(t)
\end{align}
\end{subequations}
with a standard It\^o integral on the right hand side as our rigorous interpretation of \eqref{eq:interpretation_as_SDE_1a}.

The time integration of the semi-discrete momentum balance leads to a further term which cannot be interpreted in a standard way. Namely, the flux term  $(\Delta s)^{-1}(\boldsymbol{\phi}_{i+1/2}-\boldsymbol{\phi}_{i-1/2})$ involves
\begin{subequations}\label{eq:interpretation_as_SDE_2}
\begin{align}\label{eq:interpretation_as_SDE_2a}
\frac1{\Delta s}\int_0^\tau \Big(\lambda_{i+1/2}(t)\frac{\mathbf r_{i+1}(t)-\mathbf r_i(t)}{\Delta s}-\lambda_{i-1/2}(t)\frac{\mathbf r_{i}(t)-\mathbf r_{i-1}(t)}{\Delta s}\Big)\,\dl t,
\end{align}
which cannot be defined as a Lebesgue integral either since, loosely speaking, the Lagrange multipliers $\lambda_{i\pm 1/2}$ inherit the temporal irregularity of $\boldsymbol\xi$. Therefore, we read $\lambda_{i\pm 1/2}\,\dl t$ as the infinitesimal increments $\dl \mu_{i\pm 1/2}(t)$ of $(\mathcal F_t)$-adapted, continuous, real-valued semimartingales $(\mu_{i\pm 1/2}(t))_{t\geq0}$ and interpret \eqref{eq:interpretation_as_SDE_2a} in terms of It\^o integrals w.r.t.\ these semimartingales as
\begin{align}\label{eq:interpretation_as_SDE_2b}
\frac1{\Delta s}\Bigg(
\int_0^\tau\frac{\mathbf r_{i+1}(t)-\mathbf r_i(t)}{\Delta s}\,\dl \mu_{i+1/2}(t)
- \int_0^\tau\frac{\mathbf r_{i}(t)-\mathbf r_{i-1}(t)}{\Delta s}\,\dl \mu_{i-1/2}(t)\Bigg).
\end{align}
\end{subequations}
The integrals exist if the processes $\mathbf r_{i}$ are, for example, adapted and continuous.

To state the resulting constrained stochastic differential system in a compact form, we introduce some notational conventions.

\begin{notation}\label{not:manifold_1}
Concerning the spatially discrete problem the following notations will be used throughout the paper.
\begin{itemize}[leftmargin=7mm]
\item
A generic node-associated element in $\mathbb{R}^{dN}$ is considered as a column vector and denoted by $x=(\mathbf x_i)_{i=1,\ldots,N}$, where $\mathbf x_i\in\mathbb{R}^d$ for $i=1,\ldots,N$. Analogously an edge-associated element in $\mathbb{R}^{N}$ is treated as column vector 
$\xi=(\xi_{i-1/2})_{i=1,\ldots,N}$. 
\item 
The configuration manifold for the inextensible one-sided clamped fiber in the spatially discrete setting is 
\begin{equation}\label{eq:constraint_manifold}
\mathcal M=\big\{x\in\mathbb{R}^{dN}:\,g(x)=0\big\},
\end{equation}
with the constraint function
\begin{equation}\label{eq:g}
g:\mathbb{R}^{dN}\to\mathbb{R}^{N},\quad g=(g_{i-1/2})_{i=1,\dots,N},\qquad 
g_{i-1/2}(x)=\frac12\Big(1-\frac{\|\mathbf x_{i}-\mathbf x_{i-1}\|^2}{(\Delta s)^2}\Big),
\end{equation}
where $\mathbf{x}_0=\mathbf{r}_0$ is fixed due to the clamped boundary condition.
Note that for a free moving fiber the first component of the constraint function cancels out, yielding a larger manifold.
\item
The gradient $\nabla g(x)$ of $g$ at $x\in\mathbb{R}^{dN}$ is considered as a $\mathbb{R}^{dN\times N}$-valued matrix,\footnote{
For $x=(\mathbf x_i)_{i=1,...,N} \in \mathbb{R}^{dN}$ the gradient of the constraint function reads explicitly
\begin{equation*}
\nabla g(x)= \frac{1}{(\Delta s)^2}
\begin{pmatrix}
-(\mathbf x_1-\mathbf r_0) & \;\mathbf x_2-\mathbf x_1 & \mathbf 0 & & &\vdots \\
\mathbf 0 & -(\mathbf x_2-\mathbf x_1) & \;\mathbf x_3-\mathbf x_2 & &  &\vdots \\
\vdots & \mathbf 0 & -(\mathbf x_3-\mathbf x_2) & & & \vdots \\
\vdots & \vdots & \mathbf 0 & \ddots & & \mathbf 0\\
\vdots & \vdots & \vdots & & \ddots &\;\mathbf x_N-\mathbf x_{N-1}\\
\mathbf 0 & \mathbf 0 & \mathbf 0 & \cdots & \cdots &-(\mathbf x_N-\mathbf x_{N-1})
\end{pmatrix}
\;\big(\in\mathbb{R}^{dN\times N}\big)
\end{equation*}
} i.e.,
\begin{equation*}
\nabla g(x)=\big(\nabla g_{1-1/2}(x),\ldots,\nabla g_{N-1/2}(x)\big)
\end{equation*}
where $\nabla g_{i-1/2}(x)\in\mathbb{R}^{dN}$ is a column vector for every $i=1,\ldots,N$. 
\end{itemize}
\end{notation}

\begin{remark}\label{rem:semi_disc_representation_cForce}
Let $r=(r(t))_{t\geq0}$ be the $\mathbb R^{dN}$-valued process for the unknown fiber positions (points) $r(t)=(\mathbf{r}_i(t))_{i=1,...,N}$, and $\mu=(\mu(t))_{t\geq0}$ be the $\mathbb R^{N}$-valued process for the Lagrange multipliers $\mu(t)=(\mu_{i-1/2}(t))_{i=1,...,N}$, then the term in \eqref{eq:interpretation_as_SDE_2b} coincides with the $i$-th component of the $\mathbb R^{dN}$-valued stochastic integral $\int_0^\tau\nabla g(r(t))\,\dl\mu(t)$ for $i=1,\ldots,N-1$.
\end{remark}

After these considerations, the spatially discrete fiber model can be formulated as It\^o-type stochastic differential system with constraint
\begin{equation}\label{eq:intro_semidisc}
\begin{aligned}
 \dl r(t) &= v(t)\,\dl t,\\
 \dl v(t) &= a(t,r(t),v(t))\,\dl t + B(t,r(t),v(t))\,\dl w(t) + \nabla g(r(t)) \,\dl \mu(t),\\
 g(r(t)) &= 0,
 \end{aligned}
\end{equation}
for $t\geq0$, where
\begin{itemize}[leftmargin=7mm]
\item
a solution $(r,v,\mu)$ consists of two $\mathbb R^{dN}$-valued, continuous, adapted processes $r=(r(t))_{t\geq0}$ and $v=(v(t))_{t\geq0}$ as well as a $\mathbb R^{N}$-valued, continuous $(\mathcal F_t)$-semimartingale $\mu=(\mu(t))_{t\geq0}$, see Definition~\ref{def:global_solution} for details concerning the notion of a solution;
\item
$w=(w(t))_{t\geq0}$ is the $\mathbb R^{dN}$-valued Wiener process with coordinate processes $\mathbf w_i=(\mathbf w_i(t))_{t\geq0}$ given by \eqref{eq:w_i};
\item 
the constraint function $g:\mathbb{R}^{dN}\to\mathbb{R}^{N}$ is defined by \eqref{eq:g};
\item
the involved functions for drift $a:\mathbb{R}_0^+\times\mathbb{R}^{dN}\times\mathbb{R}^{dN}\to\mathbb{R}^{dN}$, $a=(\mathbf{a}_i)_{i=1,...,N}$ and diffusion $B:\mathbb{R}_0^+\times\mathbb{R}^{dN}\times\mathbb{R}^{dN}\to\mathbb{R}^{dN\times dN}$, $B=(\mathbf{b}_{i,j})_{i,j=1,...,N}$ are expressed in terms of the discretization stencils\footnote{
The drift and diffusion functions for the fiber model \eqref{eq:intro_semidisc} are
\begin{align*}
\mathbf{a}_i(t,x,y) &= \frac{1}{\alpha^2}\frac{-\mathbf{x}_{i+2}+4\mathbf{x}_{i+1}-6\mathbf{x}_{i}+4\mathbf{x}_{i-1}-\mathbf{x}_{i-2}}{(\Delta s)^{4}}+ \frac{1}{\mathrm{Fr}^{2}}\mathbf{e}_\textrm{g}\\
&\quad + \frac{1}{2\mathrm{Dr}^{2}}\Bigg[\mathbf{C}\Big(\frac{\mathbf{x}_{i+1}+\mathbf{x}_i}{2},\frac{\mathbf x_{i+1}-\mathbf x_i}{\Delta s},t \Big)\Big(\mathbf u\Big(\frac{\mathbf x_{i+1}+\mathbf x_i}2,t\Big)-\frac{\mathbf{y}_{i+1}+\mathbf{y}_i}2\Big)\\
&\hspace*{1.3cm} + \mathbf{C}\Big(\frac{\mathbf{x}_{i}+\mathbf{x}_{i-1}}{2},\frac{\mathbf x_{i}-\mathbf x_{i-1}}{\Delta s},t\Big)\Big(\mathbf{u}\Big(\frac{\mathbf x_{i}+\mathbf x_{i-1}}2,t\Big)-\frac{\mathbf{y}_{i}+\mathbf{y}_{i-1}}2\Big)\Bigg]\\
\mathbf{b}_{i,j}(t,x,y) &= \frac{\beta}{2\sqrt{\Delta s}\mathrm{Dr}^{2}}\,\Bigg[\mathbf{C}\Big(\frac{\mathbf{x}_{i+1}+\mathbf{x}_i}{2},\frac{\mathbf x_{i+1}-\mathbf x_i}{\Delta s},t \Big)\mathbf{D}\Big(\frac{\mathbf{x}_{i+1}+\mathbf{x}_i}{2},\frac{\mathbf{y}_{i+1}+\mathbf{y}_i}{2},\frac{\mathbf x_{i+1}-\mathbf x_i}{\Delta s},t \Big)\\
&\hspace*{1.75cm} +  \mathbf{C}\Big(\frac{\mathbf{x}_{i}+\mathbf{x}_{i-1}}{2},\frac{\mathbf x_{i}-\mathbf x_{i-1}}{\Delta s},t\Big)\mathbf{D}\Big(\frac{\mathbf{x}_{i}+\mathbf{x}_{i-1}}{2},\frac{\mathbf{y}_{i}+\mathbf{y}_{i-1}}{2},\frac{\mathbf x_{i}-\mathbf x_{i-1}}{\Delta s},t\Big)\Bigg]\delta_{i,j}
\end{align*}
for $x=(\mathbf x_i)_{i=1,...,N}, y=(\mathbf y_i)_{i=1,...,N}\in\mathbb R^{dN}$ and with Kronecker delta $\delta_{i,j}$. The incorporation of the boundary conditions is realized by means of the ghost points $\mathbf{x}_j=\mathbf{r}_j$, $j\in\{-1,0,N+1,N+2\}$.
} 
in Sec.~\ref{sec:semi_disc}.
We point out that Assumption~\ref{ass:1} on the drag forces implies that the functions $a$ and $B$ fulfill the local Lipschitz and linear growth conditions \eqref{eq:locallipsch} and \eqref{eq:lineargrowth} below.
\end{itemize}

\section{Solution Theory for a Class of SDEs with Holonomic Constraint} 
\label{sec:existence_and_uniqueness}
\setcounter{equation}{0}

This section presents the main theoretical result of the paper. We proof existence and uniqueness of a global solution for a class of SDEs with holonomic constraint that contains our semi-discrete fiber model \eqref{eq:intro_semidisc}. For this purpose we use an explicit representation of the Lagrange multiplier to obtain the existence and uniqueness of a local solution. In a second step, we show that the explosion time of the local solution is infinite almost surely by making use of a one-sided linear growth property of the drift coefficient of the explicitly reformulated equation, which is valid on the constraint manifold.

\subsection{Setting and main result}

We investigate the class of constrained SDEs
\begin{equation}\label{eq:general_SDE}
\begin{aligned}
 \dl r(t) &= v(t)\,\dl t,\\
 \dl v(t) &= a(t,r(t),v(t))\,\dl t + B(t,r(t),v(t))\,\dl w(t) + \nabla g(r(t)) \,\dl \mu(t),\\
 g(r(t)) &= 0,
 \end{aligned}
\end{equation}
where, as before, $g:\mathbb R^{dN}\to\mathbb R^{N}$ is given by \eqref{eq:g} and $w=(w(t))_{t\geq0}$ is a $\mathbb R^{dN}$-valued standard Wiener process with respect to a normal filtration $(\mathcal F_t)_{t\geq0}$ on the underlying complete probability space $(\Omega,\mathcal F,\mathbb P)$. The drift and diffusion functions $a:\mathbb R^+_0\times\mathbb R^{dN}\times\mathbb R^{dN}\to\mathbb R^{dN}$ and $B:\mathbb R^+_0\times\mathbb R^{dN}\times\mathbb R^{dN} \to\mathbb R^{dN\times dN}$
satisfy the following local Lipschitz and linear growth conditions: For all $T,\,R\in(0,\infty)$ there exist constants $C_{T,R},\,C_T\in[1,\infty)$ such that
\begin{subequations}
\begin{align}\label{eq:locallipsch}
\begin{split}
\|a(t,x,y)-a(t,\tilde x,\tilde y)\|&\leq C_{T,R}\|(x,y)-(\tilde x,\tilde y)\|,\\
\|B(t,x,y)-B(t,\tilde x,\tilde y)\|&\leq C_{T,R}\|(x,y)-(\tilde x,\tilde y)\|,
\end{split} \quad (x,y),\,(\tilde x,\tilde y)\in B_R((0,0)),\;t\in[0,T],
\end{align}
where $B_R((0,0))$ is the open ball in $\mathbb{R}^{dN}\times\mathbb{R}^{dN}$ with radius $R$ and center $(0,0)$, as well as
\begin{align}\label{eq:lineargrowth}
\begin{split}
\|a(t,x,y)\|&\leq C_T(1+\|(x,y)\|),\\
\|B(t,x,y)\|&\leq C_T(1+\|(x,y)\|),
\end{split} \quad (x,y)\in\mathbb{R}^{dN}\times\mathbb{R}^{dN},\;t\in[0,T].
\end{align}
\end{subequations}
We particularly choose the growth constant to be increasing in the end time $T$, i.e., $1\leq C_s \leq C_t$ for $0\leq s\leq t$. Due to Assumption~\ref{ass:1}, the fiber model \eqref{eq:intro_semidisc} fits into this setting. 
Note that as convenient notation throughout the paper we use $\|\cdot\|$ for the Euclidean norm and $\langle\cdot,\cdot\rangle$ for the inner product in possibly different spaces such as $\mathbb R^{dN}$, $\mathbb R^{dN}\times \mathbb R^{dN}$ and $\mathbb R^{dN\times dN}$. 

Let $T_x\mathcal M$ be the tangent space at the constraint manifold $\mathcal M$ in a point $x\in \mathcal M$, compare Notation~\ref{not:manifold_2}. The notion of a global solution to \eqref{eq:general_SDE} can be made precise as follows.

\begin{definition}[Global solution]\label{def:global_solution}
A (global) solution  to the constrained SDE \eqref{eq:general_SDE} with initial conditions $r_0\in \mathcal M$ and $v_0\in T_{r_0}\mathcal M$, is a triple $(r,v,\mu)$ consisting of $\mathbb R^{dN}$-valued, continuous, $(\mathcal F_t)$-adapted processes $r=(r(t))_{t\geq0}$ and $v=(v(t))_{t\geq0}$ as well as a $\mathbb R^{N}$-valued, continuous $(\mathcal F_t)$-semimartingale $\mu=(\mu(t))_{t\geq0}$ with $\mu(0)=0$ such that, $\mathbb P$-almost surely, the following equalities hold for all $t\geq0$,
\begin{align*}
r(t)&= r_0+\int_0^t v(u)\,\dl u\\
v(t) &= v_0+\int_0^t a(u,r(u),v(u))\,\dl u + \int_0^t B(u,r(u),v(u))\,\dl w(u) + \int_0^t\nabla g(r(u)) \,\dl \mu(u),\\
g(r(t))&=0.
\end{align*}
\end{definition}
 
\begin{notation}\label{not:manifold_2} Associated with the constraint manifold $\mathcal M$ in \eqref{eq:constraint_manifold}  we use the following notations.
\begin{itemize}[leftmargin=7mm]
\item
The Gram matrix associated with the constraint is given by
\begin{equation*}
G(x)=[\nabla g(x)]^\top\nabla g(x)\;\in \mathbb{R}^{N\times N}.
\end{equation*}
It is defined for all $x\in\mathbb{R}^{dN}$ and is invertible for all $x$ close enough to the manifold $\mathcal M$, e.g., for all $x\in\mathcal  M^\varepsilon:=\{y\in \mathbb{R}^{dN}:\operatorname{dist}(y,\mathcal M)<\varepsilon\}$ with $\varepsilon=\varepsilon(N)>0$ small enough, see Lemma \ref{lem:inverse_G} for details.
\item
For  all $x\in\mathbb{R}^{dN}$ such that $G(x)$ is invertible we introduce the matrix
\begin{equation*}
P(x):=\operatorname{Id} -\nabla g(x) G^{-1}(x)[\nabla g(x)]^\top\;\in\mathbb{R}^{dN\times dN}.
\end{equation*}
If $x\in\mathcal M$, $P(x)$ represents the orthogonal projection of $\mathbb{R}^{dN}$ onto the tangent space $T_x\mathcal M=\{y\in\mathbb{R}^{dN}:[\nabla g(x)]^\top y=0\}$ at $\mathcal M$ in $x$.
\item 
By $D^2 g(x)$ we denote the second derivative of $g$ at $x\in\mathbb{R}^{dN}$. It is a bilinear mapping $D^2 g(x):\mathbb{R}^{dN}\times\mathbb{R}^{dN} \to \mathbb{R}^{N}$, i.e.,
\begin{equation*}
D^2 g(x)(v,w)=\big(v^\top D^2 g_{i-1/2}(x)w\big)_{i=1,\ldots,N}
\end{equation*}
for $v,w\in\mathbb{R}^{dN}$, where $D^2 g_{i-1/2}(x)\in\mathbb{R}^{dN\times dN}$ is the Hessian matrix of $g_{i-1/2}$ at $x$.
\end{itemize}
\end{notation}

\begin{theorem}[Existence and uniqueness of a global solution]\label{thm:main_result}
Assume that the local Lipschitz and linear growth conditions \eqref{eq:locallipsch}-\eqref{eq:lineargrowth} hold, and let $r_0\in \mathcal M$, $v_0\in T_{r_0}\mathcal M$. Then, there exists a unique (up to indistinguishability) global solution $(r,v,\mu)$ to the constrained SDE \eqref{eq:general_SDE} with initial conditions $r_0,v_0$, and, $\mathbb P$-almost surely, the following equality holds for all $t\geq 0$
\begin{equation}\label{eq:mu}
\begin{aligned}
\mu(t)&= -\int_0^tG^{-1}(r(u))\Big\{[\nabla g(r(u))]^\top\Big(a(u,r(u),v(u))\,\dl u + B(t,r(u),v(u))\,\dl w(u)\Big)\\
&\quad+D^2 g(r(u))\big(v(u),v(u)\big)\,\dl u\Big\}.
\end{aligned}
\end{equation}
Moreover, there exists a finite constant $C>0$ which does not depend on $T$ such that the global solution satisfies
\begin{align}\label{ineq:sup}
\mathbb E\bigl(\sup_{t\in[0,T]}\|(r(t),v(t))\|^4\bigr) \leq C\bigl(\mathbb E\|(r(0),v(0))\|^4+ C_T^2T^3\bigr)\operatorname{exp}\bigl(CC_T^2T^3\bigr)
\end{align}
for all $T\geq 1$ with the linear growth constant $C_T\geq1$ of \eqref{eq:lineargrowth}. 
\end{theorem}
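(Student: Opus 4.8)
The plan is to eliminate the Lagrange multiplier, thereby rewriting the constrained system \eqref{eq:general_SDE} as an unconstrained It\^o SDE on the tubular neighbourhood $\mathcal M^\varepsilon$, to solve the latter by classical local theory, and then to prove non-explosion by exploiting a one-sided growth bound that is available precisely because the solution remains on $\mathcal M$.

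First I would derive \eqref{eq:mu}. Since $\dl r=v\,\dl t$ has finite variation, $r$ carries no quadratic variation, so It\^o's formula applied to $g(r(t))$ gives $\dl g(r(t))=[\nabla g(r)]^\top v\,\dl t$; the constraint $g(r(t))\equiv0$ then forces the hidden constraint $[\nabla g(r)]^\top v\equiv0$, i.e.\ $v(t)\in T_{r(t)}\mathcal M$. Differentiating $h(t):=[\nabla g(r(t))]^\top v(t)$ once more (again $r$ is of finite variation, so $\dl(\nabla g(r))=D^2g(r)v\,\dl t$ and there is no cross-variation) and inserting $\dl v$ from \eqref{eq:general_SDE} together with $[\nabla g]^\top\nabla g=G$ yields
\[
0=\dl h = D^2g(r)(v,v)\,\dl t + [\nabla g(r)]^\top\big(a\,\dl t + B\,\dl w\big) + G(r)\,\dl\mu .
\]
Solving for $\dl\mu$, which is legitimate since $G(r)$ is invertible on $\mathcal M^\varepsilon$ by Lemma~\ref{lem:inverse_G}, and integrating with $\mu(0)=0$ gives \eqref{eq:mu}. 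Reinserting $\nabla g(r)\,\dl\mu$ and using $P(r)=\operatorname{Id}-\nabla g(r)G^{-1}(r)[\nabla g(r)]^\top$ turns the velocity equation into the explicit form
\[
\dl v = \underbrace{\big(P(r)a(t,r,v) - \nabla g(r)G^{-1}(r)D^2g(r)(v,v)\big)}_{=:\,\tilde a(t,r,v)}\,\dl t + P(r)B(t,r,v)\,\dl w .
\]
Because $g$ is polynomial and $G^{-1},P$ are smooth on $\mathcal M^\varepsilon$, the coefficients $\tilde a$ and $PB$ are locally Lipschitz there; after extending them to globally locally Lipschitz functions (or localizing), classical SDE theory provides a unique maximal local solution $(r,v)$ up to an explosion time $\zeta$.

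Next I would verify that this solution lives on $\mathcal M$, which both shows that the triple $(r,v,\mu)$ solves \eqref{eq:general_SDE} and makes the one-sided estimate available. The key identity is $[\nabla g(r)]^\top P(r)=0$; recomputing $\dl h$ along the reformulated equation it telescopes to $\dl h=0$, so $h\equiv h(0)=[\nabla g(r_0)]^\top v_0=0$, and then $\tfrac{\dl}{\dl t}g(r(t))=h=0$ together with $g(r_0)=0$ give $g(r(t))\equiv0$ on $[0,\zeta)$. Reinserting $\mu$ from \eqref{eq:mu} reproduces exactly the term $\nabla g(r)\,\dl\mu$, so $(r,v,\mu)$ solves \eqref{eq:general_SDE}; conversely, any solution must satisfy \eqref{eq:mu} by the first step and hence solves the reformulated SDE, which yields uniqueness. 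Note also that $\mathcal M$ is compact (the polygon has fixed edge lengths and a pinned end), so $r$ cannot explode and the only possible blow-up is in $v$.

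The heart of the argument is non-explosion together with the bound \eqref{ineq:sup}. The decisive observation is that the quadratic-in-$v$ term of $\tilde a$ lies in $\operatorname{range}\,\nabla g(r)=(T_r\mathcal M)^\perp$, whereas $v\in T_r\mathcal M$ along the solution; hence
\[
\langle v,\tilde a(t,r,v)\rangle = \langle v,P(r)a\rangle = \langle v,a(t,r,v)\rangle \leq \|v\|\,\|a(t,r,v)\| ,
\]
and since $r$ ranges in the compact set $\mathcal M$, the linear growth \eqref{eq:lineargrowth} converts this into the one-sided bound $\langle(r,v),(v,\tilde a)\rangle\leq C\,C_T\,(1+\|(r,v)\|^2)$. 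I would then apply It\^o's formula to $\|(r,v)\|^4$ stopped at $\tau_R=\inf\{t:\|v(t)\|\geq R\}$: the one-sided bound controls the finite-variation part, the estimate $\|P(r)B\|\leq\|B\|\leq C_T(1+\|(r,v)\|)$ controls the It\^o correction, the Burkholder--Davis--Gundy inequality controls the supremum of the martingale part, and Young's inequality absorbs it into the left-hand side. Gr\"onwall's lemma then yields a bound of the form \eqref{ineq:sup}, uniform in $R$; letting $R\to\infty$ and using Chebyshev's inequality shows $\zeta=\infty$ almost surely, and the same estimate passes to the limit to give \eqref{ineq:sup}. I expect the main obstacle to be exactly this final step: the reformulated drift is genuinely quadratic in $v$ and fails ordinary linear growth, so everything hinges on the orthogonality $\langle v,\nabla g(r)(\cdots)\rangle=0$, which holds only on $\mathcal M$, and on arranging the fourth-moment and BDG bookkeeping so that the martingale contribution can be absorbed and the precise dependence on $C_T$ and $T$ asserted in \eqref{ineq:sup} is recovered.
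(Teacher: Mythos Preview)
Your proposal is correct and follows essentially the same route as the paper: derive \eqref{eq:mu} by twice differentiating the constraint, reformulate as the explicit SDE, obtain a local solution via local Lipschitz continuity on $\mathcal M^\varepsilon$, verify it stays on $\mathcal M$, and then exploit the orthogonality $\langle v,\nabla g(r)G^{-1}(r)D^2g(r)(v,v)\rangle=0$ on the manifold to get a one-sided linear growth bound and close via Gronwall. The only minor technical deviations are that the paper applies It\^o's formula to $\|(r,v)\|^2$ and then squares (using Doob's maximal inequality and It\^o's isometry for the martingale term), whereas you go directly to $\|(r,v)\|^4$ and invoke BDG; and the paper concludes $\mathbb P(\sigma=\infty)=1$ via a monotone-convergence argument on the truncated random variables $\gamma_T\wedge K$ rather than Chebyshev---but these are equivalent standard devices. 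Your observation that $\mathcal M$ is compact (for the clamped fiber) is correct and slightly streamlines the bookkeeping, though the paper does not use it explicitly.
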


The equality \eqref{eq:mu} for the Lagrange multiplier implies that we can rewrite the original constrained SDE \eqref{eq:general_SDE} into the following explicit form,
\begin{equation}\label{eq:explicit_SDE}
\begin{aligned}
 \dl r(t) &= v(t)\,\dl t,\\
 \dl v(t) &= P(r(t))\bigl[a(t,r(t),v(t))\,\dl t+B(t,r(t),v(t))\,\dl w(t)\bigr] \\
          &\quad
          +\nabla g(r(t))G^{-1}(r(t))D^2(g(r(t)))(v(t),v(t))\,\dl t.
 \end{aligned}
\end{equation}
In the subsequent solution theory we will actually see that the existence of a (global) solution $(r,v,\mu)$ to \eqref{eq:general_SDE} and the existence of a (global) solution $(r,v)$ to \eqref{eq:explicit_SDE} are equivalent. In particular, if $(r,v)$ is a (global) solution to \eqref{eq:explicit_SDE} and $\mu$ is defined via \eqref{eq:mu}, then $(r,v,\mu)$ is a solution to \eqref{eq:general_SDE}.
The explicit version \eqref{eq:explicit_SDE} of the constrained SDE \eqref{eq:general_SDE} is known in principle, see, e.g., \cite{ciccotti:p:2006,kallemov:p:2011,lelievre:p:2012}. However, as the possibility of explosion in finite time has not been taken into account in the mentioned papers and the references therein, the presentation of a rigorous derivation seems to be justified. 

\subsection{Local solvability}
\label{sec:local_solvability}

Let us briefly recall the concepts of local solvability and explosion times. Given a $(\mathcal F_t)$-stopping time $\sigma:\Omega\to(0,\infty]$, the stochastic interval $[\![0,\sigma)\!)$ is the subset of $\Omega\times\mathbb{R}^+_0$ defined by
\begin{equation*}
[\![0,\sigma)\!):=\big\{(\omega,t)\in \Omega\times\mathbb R^+_0: t\in [0,\sigma(\omega))\big\}.
\end{equation*}

\begin{definition}[Local solution, explosion time]\label{definition:localsolution} Let $\sigma:\Omega\to(0,\infty]$ be a $(\mathcal F_t)$-stopping time. A \emph{local solution} to the constrained SDE \eqref{eq:general_SDE} up to time $\sigma$ with initial conditions $r_0\in\mathcal M$, $v_0\in T_{r_0}\mathcal M$ is a mapping
 $$(r,v,\mu):[\![0,\sigma)\!)\to \mathbb R^{dN}\times\mathbb R^{dN}\times \mathbb R^{N},$$
 measurable w.r.t.\ the trace $\sigma$-algebra $(\mathcal F\otimes\mathcal B(\mathbb{R}^+_0))\cap[\![0,\sigma)\!)$, such that, for all stopping times $\tilde \sigma <\sigma$, the triple $(\tilde r,\tilde v,\tilde\mu)$ consisting of the stopped process $\tilde r=(\tilde r(t))_{t\geq0}:=(r(t\wedge \tilde \sigma))_{t\geq0}$, $\tilde v=(\tilde v(t))_{t\geq0}:=(v(t\wedge \tilde \sigma))_{t\geq0}$ and $\tilde \mu=(\tilde \mu(t))_{t\geq0}:=(\mu(t\wedge \tilde \sigma))_{t\geq0}$ is a global solution with initial conditions $r_0,v_0$ to 
 \begin{equation*}\label{eq:stopped_SDE}
\begin{aligned}
 \dl\tilde r(t) &=\mathbbm{1}_{[0,\tilde\sigma]}(t)\,\tilde v(t)\,\dl t,\\
 \dl\tilde v(t) &=\mathbbm{1}_{[0,\tilde\sigma]}(t)\,\big[a(t,\tilde r(t),\tilde v(t))\,\dl t + B(t,\tilde r(t),\tilde v(t))\,\dl w(t) + \nabla g(\tilde r(t)) \,\dl\tilde\mu(t)\big],\\
 g(\tilde r(t)) &= 0.
 \end{aligned}\
\end{equation*}
We call $\sigma$ the \emph{explosion time} of $(r,v,\mu)$ if $\sigma$ is $\mathbb P$-a.s.\ equal to the limit of the increasing sequence  of stopping times $(\sigma_K)_{K\in\mathbb N}$ defined by
\begin{align*}
\sigma_K=\inf\{t\in[0,\infty]:\|(r(t),v(t),\mu(t))\|^2:=\|r(t)\|^2+\|v(t)\|^2+\|\mu(t)\|^2\geq K^2\},
\end{align*}
where we set $\inf\emptyset:=\infty$.
\end{definition}

The local solution to the explicit equation~\eqref{eq:explicit_SDE} and its explosion time are defined analogously to Definition~\ref{definition:localsolution}, cf.\ \cite{hsu:b:2002}. The following technical lemma is needed to prove the local solvability.

\begin{lemma}\label{lem:inverse_G}
For all $N\in \mathbb N$ let $\varepsilon=\varepsilon(N)=\Delta s/4$. Then, for all $x\in\mathcal M^{\varepsilon}$, the Gram matrix $G(x)$ is invertible and $\inf_{x\in\mathcal M^{\varepsilon}}\operatorname{det}(G(x))>0$.
\end{lemma}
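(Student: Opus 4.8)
The plan is to reduce the statement to a pointwise positivity of $\det G$ on a compact tubular neighborhood of $\mathcal M$ and then invoke continuity. Write $\mathbf e_i := \mathbf x_i - \mathbf x_{i-1}$ for the edge vectors, with $\mathbf x_0 = \mathbf r_0$ fixed. The footnote formula for $\nabla g$ shows that the column $\nabla g_{i-1/2}(x)$ has, up to the scalar $(\Delta s)^{-2}$, only two nonzero blocks: $-\mathbf e_i$ in block $i$ and $+\mathbf e_i$ in block $i-1$ (only the former for $i=1$). Consequently $G(x)=[\nabla g(x)]^\top\nabla g(x)$ is a symmetric tridiagonal matrix whose entries involve $\|\mathbf e_i\|^2$ and $\langle\mathbf e_i,\mathbf e_{i+1}\rangle$.

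First I would establish pointwise invertibility for every $x$ whose edge vectors are all nonzero. For $\xi=(\xi_{i-1/2})_{i=1,\ldots,N}\in\mathbb R^N$, computing the vector $(\Delta s)^2\nabla g(x)\xi\in\mathbb R^{dN}$ block by block yields the identity
\[
(\Delta s)^4\,\xi^\top G(x)\,\xi \;=\; \bigl\|(\Delta s)^2\,\nabla g(x)\,\xi\bigr\|^2 \;=\; \sum_{k=1}^{N-1}\bigl\|\xi_{k+1/2}\,\mathbf e_{k+1}-\xi_{k-1/2}\,\mathbf e_k\bigr\|^2 \;+\; \xi_{N-1/2}^2\,\|\mathbf e_N\|^2 .
\]
This quadratic form is manifestly nonnegative, and it vanishes only if $\xi_{N-1/2}=0$ (from the last term, using $\mathbf e_N\neq\mathbf 0$) and then, reading the squared norms from $k=N-1$ down to $k=1$, successively $\xi_{N-3/2}=0,\ldots,\xi_{1/2}=0$. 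Hence $G(x)$ is positive definite, in particular invertible with $\det G(x)>0$, as soon as $\mathbf e_i\neq\mathbf 0$ for all $i$.

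Next I would localize the nonvanishing of the edges to a tube. For $x$ with $\operatorname{dist}(x,\mathcal M)\le\varepsilon$ pick $\bar x\in\mathcal M$ with $\|x-\bar x\|\le\varepsilon$; since $\|x-\bar x\|^2=\sum_i\|\mathbf x_i-\bar{\mathbf x}_i\|^2$, each block satisfies $\|\mathbf x_i-\bar{\mathbf x}_i\|\le\varepsilon$, whence $\|\mathbf e_i-\bar{\mathbf e}_i\|\le 2\varepsilon$ (and $\le\varepsilon$ for $i=1$, as $\mathbf x_0=\bar{\mathbf x}_0=\mathbf r_0$). With $\varepsilon=\Delta s/4$ and $\|\bar{\mathbf e}_i\|=\Delta s$ on $\mathcal M$, the reverse triangle inequality gives $\|\mathbf e_i\|\ge\Delta s-2\varepsilon=\Delta s/2>0$ for all $i$. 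By the previous step, $\det G(x)>0$ on the closed tube $K:=\{y\in\mathbb R^{dN}:\operatorname{dist}(y,\mathcal M)\le\varepsilon\}\supseteq\mathcal M^{\varepsilon}$.

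Finally I would conclude by compactness. Because the clamped point $\mathbf r_0$ is fixed and every edge has length $\Delta s$ on $\mathcal M$, each $\mathbf x_i$ lies within distance $i\,\Delta s$ of $\mathbf r_0$, so $\mathcal M$ is bounded; being the zero set of the continuous map $g$ it is also closed, hence compact, and therefore so is $K$. The map $x\mapsto\det G(x)$ is polynomial, thus continuous, and strictly positive on the compact set $K$, so it attains a positive minimum there; consequently $\inf_{x\in\mathcal M^{\varepsilon}}\det G(x)\ge\min_{x\in K}\det G(x)>0$. The only genuinely computational point is the quadratic-form identity of the second paragraph, which I expect to be the crux; once it is in hand, the edge estimate and the compactness argument are routine.
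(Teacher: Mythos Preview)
Your argument is correct. The quadratic-form identity is right (it follows directly from reading the block rows of $\nabla g(x)$ in the footnote), the back-substitution showing $\ker G(x)=\{0\}$ whenever all $\mathbf e_i\neq\mathbf 0$ is clean, the edge-length bound $\|\mathbf e_i\|\ge\Delta s/2$ on the closed tube is exactly the same triangle-inequality step the paper uses, and the compactness of $\mathcal M$ (via the clamped endpoint) and hence of $K$ is straightforward.

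Your route, however, differs from the paper's. The paper does not invoke compactness at all: after the same edge-length bound, it factors the determinant by multilinearity to a normalized tridiagonal matrix $\tilde G(x)$ with diagonal $(1,2,2,\ldots,2)$ and off-diagonal entries in $[-1,1]$, and then shows by a two-term Laplace recursion that the leading principal minors of $\tilde G(x)$ are nondecreasing, hence $\det\tilde G(x)\ge 1$. This yields the \emph{explicit} uniform lower bound $\det G(x)\ge(2\Delta s)^{-2N}$ for every $x\in\mathcal M^{\varepsilon}$. Your compactness argument is shorter and conceptually cleaner, but it is nonconstructive: it gives no information on how the infimum depends on $N$ or $\Delta s$. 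For the purposes of this paper the distinction is immaterial (only the qualitative positivity is used in Lemma~\ref{lem:local_solv}), but the paper's approach would be the one to keep if quantitative control of $\|G^{-1}\|$ in terms of the discretization parameter were ever needed. One minor remark: your argument relies on the clamped boundary condition to obtain boundedness of $\mathcal M$; for the free-fiber variant mentioned in Notation~\ref{not:manifold_1} the manifold is translation-invariant and noncompact, so your compactness step would need an additional reduction (e.g.\ quotienting by translations or noting that $G$ depends only on the edge vectors), whereas the paper's explicit estimate goes through unchanged.
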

\begin{proof}
By Cramer's rule we know that $G(x)$ is invertible with $G^{-1}(x)=\operatorname{adj}(G(x))/\operatorname{det}(G(x))$ if $\operatorname{det}G(x) >0$. Fix some $x=(\mathbf x_i)_{i=1,...,N}\in\mathcal M^\varepsilon$. The explicit representation of $\nabla g(x)$ yields that $G(x)=\big( G_{i,j}(x)\big)_{i,j=1}^{N}$ can be defined as a tridiagonal matrix whose elements fulfill
\begin{align*}
G_{1,1}(x)=(\Delta s)^{-4}\|\mathbf x_{1}- \mathbf r_{0}\|^2,& \qquad  G_{1,2}(x)= G_{2,1}(x)= -(\Delta s)^{-4}\big\langle\mathbf x_1-\mathbf r_0,\mathbf x_{2}-\mathbf x_1\big\rangle\\
G_{i,i}(x)=2(\Delta s)^{-4}\|\mathbf x_{i}- \mathbf x_{i-1}\|^2,& \qquad  G_{j,j+1}(x)= G_{j+1,j}(x)= -(\Delta s)^{-4}\big\langle\mathbf x_j-\mathbf x_{j-1},\mathbf x_{j+1}-\mathbf x_j\big\rangle
\end{align*}
for all $i=2,\dots,N$ and $j=2,\dots,N-1$ with inner product $\langle \cdot, \cdot \rangle$. The triangle inequality applied to some element of $\mathcal M\cap \overline{B_{\varepsilon}(x)}$ implies $\Delta s- 2\varepsilon\leq \|\mathbf x_{i}-\mathbf x_{i-1}\|$ and $\Delta s- 2\varepsilon\leq\|\mathbf x_{1}-\mathbf r_{0}\|$.
The multilinearity of the determinant leads to
\begin{align*}
\operatorname{det}(G(x))&\geq \operatorname{det}(\tilde G(x)) \left(\frac{(\Delta s- 2\varepsilon)^2}{(\Delta s)^4}\right)^{N} = \operatorname{det}(\tilde G(x)) (2\Delta s)^{-2N}
\end{align*}
for the tridiagonal matrix $\tilde G(x)=\big(\tilde G_{i,j}(x)\big)_{i,j=1}^{N}$ defined by
\begin{align*}
\tilde G_{1,1}(x)=1, &\qquad \tilde G_{1,2}(x)=\tilde G_{2,1}(x)= -\frac{\big\langle\mathbf x_1-\mathbf r_{0},\mathbf x_{2}-\mathbf x_1\big\rangle}{\|\mathbf x_{2}-\mathbf x_1\|\|\mathbf x_{1}-\mathbf r_0\|}\in[-1,1]\\
\tilde G_{i,i}(x)=2, &\qquad \tilde G_{j,j+1}(x)=\tilde G_{j+1,j}(x)= -\frac{\big\langle\mathbf x_j-\mathbf x_{j-1},\mathbf x_{j+1}-\mathbf x_j\big\rangle}{\|\mathbf x_{j+1}-\mathbf x_j\|\|\mathbf x_{j-1}-\mathbf x_j\|}\in[-1,1].
\end{align*}
By $\tilde G^{(k)}(x)=\big(\tilde G_{i,j}(x)\big)_{i,j=1}^{k}$ we denote the $k\times k$ upper left block of $\tilde G(x)$. The Laplace expansion yields the following recursive formula for $\tilde G^{(k)}(x)$ w.r.t.\ $k$
\begin{align*}
\operatorname{det} \tilde G^{(k)}(x)=2\operatorname{det} \tilde G^{(k-1)}(x)- G_{k,k-1}(x)^2\operatorname{det} \tilde G^{(k-2)}(x).
\end{align*}
By induction $\operatorname{det} \tilde G^{(k)}(x)\geq \operatorname{det} \tilde G^{(k-1)}(x)$ can be concluded, since $\operatorname{det} (\tilde G^{(2)}(x))\geq\operatorname{det}( \tilde G^{(1)}(x))$ and
\begin{align*}
\operatorname{det} \tilde G^{(k)}(x)&=2\operatorname{det} \tilde G^{(k-1)}(x)- G_{k,k-1}(x)^2\operatorname{det} \tilde G^{(k-2)}(x)\geq 2\operatorname{det} \tilde G^{(k-1)}(x)- \operatorname{det} \tilde G^{(k-2)}(x)\\
&\geq\operatorname{det} \tilde G^{(k-1)}(x).
\end{align*}
Therefore, we have $\operatorname{det}(\tilde G(x))\geq \operatorname{det}(\tilde G^{(1)}(x))=1$ and $$\operatorname{det}(G(x))\geq (2\Delta s)^{-2N}$$ is bounded from below for any $x\in\mathcal M^\varepsilon$.
\end{proof}

\begin{lemma} \label{lem:unique_mu}
Let $\sigma:\Omega\to(0,\infty]$ be a $(\mathcal F_t)$-stopping time. 
If $(r,v,\mu)$ is a local solution to \eqref{eq:general_SDE} up to time $\sigma$ with initial conditions $r_0\in\mathcal M$, $v_0\in T_{r_0}\mathcal M$, then the equality \eqref{eq:mu} holds on $[\![0,\sigma)\!)$ in the following sense: For every $(\mathcal F_t)$-stopping time $\tilde\sigma<\sigma$ we have that, $\mathbb{P}$-almost surely, the following equality holds for all $t\geq0$,
\begin{equation}\label{eq:tildemu}
\begin{aligned}
\tilde\mu(t)&= -\int_0^t\mathbbm{1}_{[0,\tilde\sigma]}(u)G^{-1}(\tilde r(u))\Big\{[\nabla g(\tilde r(u))]^\top\Big(a(t,\tilde r(u),\tilde v(u))\,\dl u + B(t,\tilde r(u),\tilde v(u))\,\dl w(u)\Big)\\
&\quad+D^2 g(\tilde r(u))\big(\tilde v(u),\tilde v(u)\big)\,\dl u\Big\},
\end{aligned}
\end{equation}
where $\tilde r$, $\tilde v$ and $\tilde\mu$ are as in Definition~\ref{definition:localsolution}. Conversely, if $(r,v)$ is a local solution to the explicit equation~\eqref{eq:explicit_SDE} up to time $\sigma$ with initial conditions $r_0\in\mathcal M$, $v_0\in T_{r_0}\mathcal M$, and $\mu:[\![0,\sigma)\!)\to\mathbb{R}^{N}$ is defined by $\mu|_{[\![0,\tilde \sigma)\!)}:=\tilde \mu|_{[\![0,\tilde \sigma)\!)}$ with $\tilde\mu$ given by \eqref{eq:tildemu}, then $(r,v,\mu)$ is a local solution to \eqref{eq:general_SDE} up to time $\sigma$ with initial conditions $r_0,v_0$.
\end{lemma}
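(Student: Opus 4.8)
The plan is to obtain both implications by differentiating the holonomic constraint twice along the dynamics, crucially exploiting that the position process $r$ (equivalently the stopped process $\tilde r$) is of finite variation because $\dl r=v\,\dl t$. Throughout I would work with the stopped triple $(\tilde r,\tilde v,\tilde\mu)$ of Definition~\ref{definition:localsolution}, which by hypothesis solves the stopped version of \eqref{eq:general_SDE} globally; this lets all manipulations take place for genuine semimartingales on $[0,\infty)$, while the stochastic interval $[\![0,\sigma)\!)$ is exhausted by letting $\tilde\sigma\uparrow\sigma$. The indicator $\mathbbm 1_{[0,\tilde\sigma]}$ from the stopped equation is merely carried along, so I suppress it below.

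For the forward direction I would first apply the chain rule to the finite-variation process $\tilde r$ to get $\dl\,g(\tilde r(t))=[\nabla g(\tilde r(t))]^\top \tilde v(t)\,\dl t$ on $[0,\tilde\sigma]$; since $g(\tilde r(t))\equiv g(r_0)=0$ this forces $[\nabla g(\tilde r(t))]^\top\tilde v(t)=0$, i.e.\ $\tilde v(t)\in T_{\tilde r(t)}\mathcal M$. Differentiating the $\mathbb R^N$-valued process $[\nabla g(\tilde r)]^\top\tilde v$ once more via the It\^o product rule, and using that $\nabla g(\tilde r)$ is again of finite variation so that the covariation cross-term vanishes, I obtain
\begin{equation*}
0 = D^2 g(\tilde r)(\tilde v,\tilde v)\,\dl t + [\nabla g(\tilde r)]^\top\bigl(a\,\dl t + B\,\dl w\bigr) + G(\tilde r)\,\dl\tilde\mu ,
\end{equation*}
recognizing $[\nabla g(\tilde r)]^\top\nabla g(\tilde r)=G(\tilde r)$. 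On $\mathcal M$ the Gram matrix is invertible with determinant bounded away from $0$ (Lemma~\ref{lem:inverse_G}, applied with $\tilde r(t)\in\mathcal M\subset\mathcal M^\varepsilon$), so $G^{-1}(\tilde r)$ is a continuous adapted integrand; solving for $\dl\tilde\mu$ and integrating from $\mu(0)=0$ then yields exactly \eqref{eq:tildemu}.

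For the converse I would proceed in two steps. The algebraic step substitutes the candidate $\mu$ from \eqref{eq:tildemu} into the velocity equation of \eqref{eq:general_SDE} and applies the two projection identities $[\nabla g(r)]^\top P(r)=0$ and $[\nabla g(r)]^\top\nabla g(r)G^{-1}(r)=\operatorname{Id}$, immediate from the definitions in Notation~\ref{not:manifold_2}; this converts the $\nabla g(r)\,\dl\mu$ contribution into precisely the terms of \eqref{eq:explicit_SDE}, so $(r,v)$ solves \eqref{eq:explicit_SDE}. The geometric step verifies constraint preservation: setting $\psi:=g(r)$ and $\chi:=[\nabla g(r)]^\top v$, the same two differentiations give $\dot\psi=\chi$ and, after inserting the explicit drift and again using the two projection identities, the $D^2 g$ contributions cancel so that $\dl\chi=0$. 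Since $\chi(0)=[\nabla g(r_0)]^\top v_0=0$ by $v_0\in T_{r_0}\mathcal M$ and $\psi(0)=g(r_0)=0$ by $r_0\in\mathcal M$, one concludes $\chi\equiv0$ and then $\psi\equiv0$, i.e.\ $g(r(t))=0$ on $[\![0,\sigma)\!)$; together with the algebraic step this shows that $(r,v,\mu)$ solves \eqref{eq:general_SDE}.

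The main obstacle I anticipate is the rigorous bookkeeping of the second differentiation: one must justify the vector-valued It\^o formula for the composite $[\nabla g(\tilde r)]^\top\tilde v$, confirm that the finite-variation character of $\tilde r$ annihilates the quadratic-covariation term (so that the $B\,\dl w$ part and the martingale part of $\mu$ enter only linearly), and then match the martingale and the finite-variation parts of the resulting semimartingale identity in order to invert $G$ and isolate $\dl\tilde\mu$. The remaining care is routine but necessary: ensuring that $G^{-1}(\tilde r)$, $\nabla g(\tilde r)$ and $D^2 g(\tilde r)$ are admissible (continuous, adapted, suitably integrable) integrands so that all It\^o integrals in \eqref{eq:tildemu} exist, and patching the locally defined $\mu$ consistently across the exhausting sequence $\tilde\sigma\uparrow\sigma$ into a single process on $[\![0,\sigma)\!)$.
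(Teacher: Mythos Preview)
Your proposal is correct and follows essentially the same approach as the paper: differentiate the constraint once using that $\tilde r$ is of finite variation to obtain the hidden constraint $[\nabla g(\tilde r)]^\top\tilde v=0$, apply It\^o's formula to this $\mathbb R^N$-valued process (the paper writes it as $f(x,y)=(\nabla g(x))^\top y$), and then integrate $G^{-1}(\tilde r(\cdot))$ against the resulting semimartingale identity to isolate $\tilde\mu$; for the converse both you and the paper reduce the claim to verifying $[\nabla g(r)]^\top v\equiv 0$ via It\^o's formula. One small remark: you do not actually need to ``match the martingale and the finite-variation parts'' separately---since $G(\tilde r)$ multiplies the full increment $\dl\tilde\mu$, a single stochastic integration of $G^{-1}(\tilde r)$ against the identity suffices, which is also how the paper phrases it.
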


\begin{proof}
Let $(r,v,\mu)$ be a local solution to \eqref{eq:general_SDE} up to time $\sigma$ with initial conditions $r_0\in\mathcal M$ and $v_0\in T_{r_0}\mathcal M$. Furthermore let $\tilde \sigma$ be any fixed stopping time fulfilling $\tilde\sigma <\sigma$. Note that, $\mathbb P$-almost surely, $g\circ r$ is differentiable and constantly zero on $[0,\tilde \sigma]$. Thus, applying the chain rule on the constraint yields, $\mathbb P$-almost surely,
\begin{align*}
0=\frac{\dl}{\dl t} g(r(t))=(\nabla g(r(t)))^\top v(t) \qquad \text{for all } t\in [0,\tilde \sigma].
\end{align*}
Recall $\tilde r=(r(t\wedge \tilde\sigma))_{t\geq 0}$ and $\tilde v=(v(t\wedge \tilde\sigma))_{t\geq 0}$ from Definition \ref{definition:localsolution}. Since $r$, $\tilde r$ as well as $v$, $\tilde v$ coincide on $[\![0,\tilde \sigma]\!]$, it holds, $\mathbb{P}$-almost surely, that
\begin{align}\label{eq:hidden_Dg}
0=(\nabla g(\tilde r(t)))^\top \tilde v(t) \qquad \text{for all } t\in[0,\infty).
\end{align}
Let $f:\mathbb R^{dN}\times \mathbb R^{dN} \to\mathbb R$ be defined by $f(x,y)=(\nabla g(x))^\top y$ for any $x,y \in \mathbb R^{dN}$, and let $\nabla_x$ and $\nabla_y$ be the gradients w.r.t.\ the variables $x$ and $y$, respectively, i.e., $\nabla_x f(x,y)=\nabla(f(\cdot,y))(x)$. We want to apply It\^o's formula to $f(\tilde r,\tilde v)$. Note that the process $\tilde r$ has bounded variation and therefore regarded as a semimartingale its martingale part is zero. Then, by It\^o's formula,
\begin{align}\label{eq:itoonrv1}
f(\tilde r(t),\tilde v(t))-f(r_0,v_0)&=\int_0^t \nabla_x f(\tilde r(u),\tilde v(u))\dl \tilde r(u)+\int_0^t \nabla_y f(\tilde r(u),\tilde v(u)) \dl \tilde v(u)
\end{align}
where all second order terms vanish since $(\tilde r(t))_{t\geq 0}$ has bounded variation and $f$ is linear in $y$. Combining \eqref{eq:hidden_Dg} and \eqref{eq:itoonrv1} and inserting the definition of $f$, $\tilde r$ and $\tilde v$ yields
\begin{align}\label{eq:itoonrv2}
\begin{split}
0&=\int_0^t \mathbbm{1}_{[0,\tilde \sigma]}(u)\biggl\{(\nabla g(\tilde r(u)))^\top\Bigl[ (\nabla g(\tilde r(u)))\dl \tilde \mu(u)+ a(u,\tilde r(u),\tilde v(u))\dl u\\
&\qquad+ B(u,\tilde r(u),\tilde v(u))\dl w(u)\Bigr]+D^2g(\tilde r(u))(\tilde v(u), \tilde v(u))\dl u\biggr\}.
\end{split}
\end{align}
Integrating $G^{-1}(r(\cdot))$ w.r.t.\ the semimartingale defined by the right hand side of \eqref{eq:itoonrv2} results in \eqref{eq:tildemu}.
Let $(r,v)$ be any local solution to the explicit equation~\eqref{eq:explicit_SDE} up to time $\sigma$ with initial conditions $r_0\in \mathcal M$ and $v_0\in T_{r_0}\mathcal M$. Further let $\tilde \sigma$ be again any fixed stopping time fulfilling $\tilde\sigma <\sigma$. Note that, $\mathbb{P}$-almost surely, $g\circ r$ is differentiable on $[0,\tilde \sigma]$. Recall that $g(r_0)=0$. Therefore $(r,v)$ combined with $\mu:[\![0,\sigma)\!)\to\mathbb{R}^{N}$ defined by \eqref{eq:tildemu} is a local solution to \eqref{eq:general_SDE} up to time $\sigma$ with initial conditions $r_0,v_0$, if, $\mathbb{P}$-almost surely,
\begin{align*}
0=\frac{\dl}{\dl t} g(r(t))=(\nabla g(r(t)))^\top v(t) \qquad \text{for all } t\in [0,\tilde \sigma].
\end{align*}
This equality can be verified by using It\^o's formula.
\end{proof}

The unique local solvability of the manifold-valued explicit SDE~\eqref{eq:explicit_SDE} can be shown by using the general (abstract) solution theory in \cite{hsu:b:2002}. For the sake of completeness and in order to gain more insight in our specific problem, we state here a concise proof that makes use of the structure of $\mathcal M$ and the explicit equation~\eqref{eq:explicit_SDE}.

\begin{lemma}\label{lem:local_solv}
The SDE \eqref{eq:explicit_SDE} has a unique local solution $(r,v)$ up to its explosion time $\sigma$.
\end{lemma}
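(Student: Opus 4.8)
The plan is to read \eqref{eq:explicit_SDE} as a genuine It\^o SDE whose drift and diffusion are defined and locally Lipschitz only on a neighbourhood of $\mathcal M$, to localise them to globally locally Lipschitz coefficients by a cut-off, to obtain a unique maximal local solution from the classical theory for such SDEs, and finally to show that this solution never leaves $\mathcal M$, so that it in fact solves \eqref{eq:explicit_SDE} up to its explosion time.

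First I would record the regularity of the coefficients. Since $g$ in \eqref{eq:g} is quadratic, $\nabla g$ is affine and $D^2 g$ is constant; by Lemma~\ref{lem:inverse_G} the determinant of $G$ is bounded away from zero on $\mathcal M^{\varepsilon}$ with $\varepsilon=\Delta s/4$, hence $G^{-1}=\operatorname{adj}(G)/\operatorname{det}(G)$ and $P=\operatorname{Id}-\nabla g\,G^{-1}[\nabla g]^\top$ are smooth on $\mathcal M^{\varepsilon}$. Together with \eqref{eq:locallipsch} this shows that the drift $\bar a(t,x,y):=P(x)a(t,x,y)+\nabla g(x)G^{-1}(x)D^2g(x)(y,y)$ and the diffusion $\bar B(t,x,y):=P(x)B(t,x,y)$ of \eqref{eq:explicit_SDE} are locally Lipschitz in $(x,y)$, locally uniformly in $t$, on the open set $\mathcal M^{\varepsilon}\times\mathbb R^{dN}$. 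I would then fix a smooth cut-off $\psi:\mathbb R^{dN}\to[0,1]$ with $\psi\equiv1$ on $\overline{\mathcal M^{\varepsilon/2}}$ and $\psi\equiv0$ outside $\mathcal M^{3\varepsilon/4}$, and pass to the modified coefficients $\psi\bar a$ and $\psi\bar B$ (extended by $0$). These agree with $\bar a,\bar B$ on $\mathcal M^{\varepsilon/2}\times\mathbb R^{dN}$ and are locally Lipschitz on all of $\mathbb R^{dN}\times\mathbb R^{dN}$, so the classical existence and uniqueness theorem for It\^o SDEs with locally Lipschitz coefficients (cf.\ \cite{hsu:b:2002}) yields a unique maximal local solution $(r,v)$ up to an explosion time $\sigma$, with $r(t)=r_0+\int_0^t v(u)\,\dl u$.

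The decisive step is to show that this solution stays on $\mathcal M$. Put $\phi(t):=g(r(t))$ and $\kappa(t):=[\nabla g(r(t))]^\top v(t)$. As $r$ is absolutely continuous with $\dl r=v\,\dl t$, the map $\phi$ is $C^1$ with $\dot\phi=\kappa$; moreover $r$ has bounded variation and $(x,y)\mapsto[\nabla g(x)]^\top y$ is linear in $y$, so It\^o's formula applied to $\kappa$ produces no It\^o correction terms (the cross variation $[r,v]$ vanishes) and yields
\begin{equation*}
\dl\kappa(t)=D^2 g(r(t))\big(v(t),v(t)\big)\,\dl t+[\nabla g(r(t))]^\top\dl v(t).
\end{equation*}
On the stochastic interval where $\psi(r)=1$ the coefficients of \eqref{eq:explicit_SDE} are in force, and the algebraic identities $[\nabla g]^\top P=[\nabla g]^\top-G\,G^{-1}[\nabla g]^\top=0$ and $[\nabla g]^\top\nabla g=G$ --- which are precisely the identities behind the derivation of \eqref{eq:mu} --- make the $a$- and $B$-contributions of $\dl v$ drop out, while the correction term of $\dl v$ cancels the $D^2 g(r)(v,v)\,\dl t$ appearing above, so that $\dl\kappa\equiv0$ there.

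To close the argument without circularity I would introduce the stopping time $\tau:=\inf\{t\geq0:\operatorname{dist}(r(t),\mathcal M)\geq\varepsilon/2\}$. On $[\![0,\tau)\!)$ the modified and original coefficients coincide, hence $\dot\phi=\kappa$ and $\dl\kappa=0$; together with $\phi(0)=g(r_0)=0$ (as $r_0\in\mathcal M$) and $\kappa(0)=[\nabla g(r_0)]^\top v_0=0$ (as $v_0\in T_{r_0}\mathcal M$) this forces $\kappa\equiv0$ and $\phi\equiv0$, i.e.\ $r(t)\in\mathcal M$ for all $t<\tau$. By continuity $r(\tau)\in\mathcal M$ on $\{\tau<\sigma\}$, whence $\operatorname{dist}(r(\tau),\mathcal M)=0<\varepsilon/2$, contradicting the definition of $\tau$; thus $\tau\geq\sigma$ and $r(t)\in\mathcal M$ for all $t<\sigma$. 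In particular the path never reaches the region where $\psi<1$, so $(r,v)$ solves \eqref{eq:explicit_SDE} up to the explosion time $\sigma$. Uniqueness follows because any local solution of \eqref{eq:explicit_SDE} is, by the same computation, confined to $\mathcal M$ and hence also solves the globally locally Lipschitz modified system, whose local solution is unique up to indistinguishability. I expect this invariance step to be the main obstacle: it requires getting the It\^o bookkeeping right (the vanishing of the cross-variation terms by the bounded variation of $r$) and organising the cut-off and the stopping time $\tau$ so that the cancellations $[\nabla g]^\top P=0$ and $[\nabla g]^\top\nabla g=G$ may legitimately be invoked along the trajectory.
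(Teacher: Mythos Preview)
Your argument is correct and follows essentially the same route as the paper: cut off the coefficients of \eqref{eq:explicit_SDE} outside a tubular neighbourhood $\mathcal M^\varepsilon$ to obtain a locally Lipschitz SDE on all of $\mathbb R^{dN}\times\mathbb R^{dN}$, invoke the standard local existence/uniqueness theorem, and use It\^o's formula on $[\nabla g(r)]^\top v$ to show the solution never leaves $\mathcal M$, so that the cut-off is immaterial. The paper applies the cut-off directly to $G^{-1}$ (writing $\tilde G^{-1}=\varphi(\operatorname{dist}(\cdot,\mathcal M)/\varepsilon)\,G^{-1}$) rather than to the full drift and diffusion, and is terser on the invariance step (referring back to the It\^o computation in the proof of Lemma~\ref{lem:unique_mu}), but these are only cosmetic differences.
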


\begin{proof}
We consider a locally Lipschitz continuous extension of the coefficients in the explicit SDE \eqref{eq:explicit_SDE} towards the space $\mathbb R^{dN}$. Recall that the drift and diffusion functions $a$ and $B$ are locally Lipschitz continuous and already defined on $\mathbb R^{dN}$. Moreover, $\nabla g$ and $D^2g$ are defined on $\mathbb R^{dN}$ and are smooth. According to Lemma~\ref{lem:inverse_G}, $G(x)$ has an inverse $G^{-1}(x)$ whose matrix norm is bounded for all $x\in\mathcal M^{\varepsilon}$. We fix $\varepsilon>0$ and introduce the extension
\begin{align}\label{eq:extensionG}
\tilde G^{-1}(x)=\begin{cases}
\varphi(\operatorname{dist}(x,\mathcal M)/\varepsilon)\,G^{-1}(x) &\text{for all }x\in\mathcal M^{\varepsilon}\\
0 &\text{else}
\end{cases}
\end{align}
with $\varphi\in C^1\bigl([0,\infty)\bigr)$ such that $\varphi(x)=1$ for all $x\in[0,1/2]$ and $\varphi(x)=0$ for all $x\in[3/4,\infty)$. The fact that $G^{-1}(x)=\operatorname{adj}(G(x))/\operatorname{det}(G(x))$ and $\inf_{x\in\mathcal M^{\varepsilon}}\operatorname{det}(G(x))>0$ (cf.\ Lemma~\ref{lem:inverse_G}) implies that $G^{-1}(x)|_{\mathcal M^\varepsilon}$ can be written as a composition of smooth functions on $\mathcal M^\varepsilon$. Thus, we have the following explicit stochastic differential system with extended locally Lipschitz coefficients
\begin{align}\label{eq:extendedexplicitSDE}
\begin{aligned}
 \dl r(t) &= v(t)\,\dl t,\\
 \dl v(t) &= \tilde P(r(t))\bigl[a(t,r(t),v(t))\,\dl t+B(t,r(t),v(t))\,\dl w(t)\bigr]\\
          &\quad-\nabla g(r(t))\tilde G^{-1}(r(t))D^2(g(r(t)))(v(t),v(t))\,\dl t.
 \end{aligned}
\end{align}
The existence and uniqueness of a local solution $(r,v)$ to the SDE~\eqref{eq:extendedexplicitSDE} up to its explosion time $\sigma$ is covered in literature, see e.g.\ \cite[Lemma 18.15]{schilling:b:2014}. In addition, $\mathbb P$-almost surely, $r(t)\in\mathcal M$ and $v(t)\in T_{r(t)}\mathcal M$ for all $t\in[0,\sigma)$, which can be concluded from applying It\^o's formula to the derivative of $g\circ r$ (as done in the proof of Lemma \ref{lem:unique_mu}). Consequently, the local solution of \eqref{eq:extendedexplicitSDE} depends neither on $\varepsilon$ nor on the extension~\eqref{eq:extensionG}. This guarantees that the process $(r,v)$ is the local solution to \eqref{eq:explicit_SDE}.
\end{proof}

Lemma~\ref{lem:unique_mu} and Lemma~\ref{lem:local_solv} provide the existence and uniqueness of a local solution $(r,v,\mu)$ to the constrained SDE~\eqref{eq:general_SDE} up to the stopping time $\sigma$ appearing in Lemma~\ref{lem:local_solv}. The definition of the explosion time of $(r,v,\mu)$ and the explicit representation of $\mu$ in \eqref{eq:tildemu} yield that $\sigma$ is the explosion time of this solution in the sense of Definition~\ref{definition:localsolution}.

\subsection{Global solvability}
\label{sec:global_solvability}

We prove the global existence and uniqueness result (Theorem~\ref{thm:main_result}) by showing that the explosion time $\sigma$ of the local solution satisfies $\mathbb P(\sigma=\infty)=1$ and that estimate \eqref{ineq:sup} holds.

\begin{proposition}\label{globalsolution}
Let $(r,v)$ be the unique local solution of the explicit SDE~\eqref{eq:explicit_SDE} and let $\sigma:\Omega\to(0,\infty]$ be its explosion time. Then we have $\mathbb P(\sigma=\infty)=1$, i.e., $(r,v)$ is a global solution to the SDE~\eqref{eq:explicit_SDE}. Furthermore, the global solution to \eqref{eq:explicit_SDE} is unique up to indistinguishability and satisfies the estimate \eqref{ineq:sup}.
\end{proposition}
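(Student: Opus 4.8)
The plan is to dispense with the Lagrange multiplier entirely and work with the explicit equation \eqref{eq:explicit_SDE}, for which Lemma~\ref{lem:local_solv} already supplies a unique local solution $(r,v)$ with explosion time $\sigma$, and along which $r(t)\in\mathcal M$ and $v(t)\in T_{r(t)}\mathcal M$ for all $t<\sigma$. To prove $\mathbb P(\sigma=\infty)=1$ it suffices to produce an a~priori fourth-moment bound on $\sup_{t\le T}\|(r(t),v(t))\|$ that is uniform in the localizing level $K$ of Definition~\ref{definition:localsolution}; non-explosion, the estimate \eqref{ineq:sup}, and (via local uniqueness) global uniqueness will then all drop out. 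First I would record the decisive algebraic cancellation.

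The key observation is that, since $v(t)\in T_{r(t)}\mathcal M$ on $[\![0,\sigma)\!)$, the projection $P(r)$ acts as the identity on $v$, so $P(r)v=v$ and, because $P(r)$ is self-adjoint, $\langle v,P(r)a\rangle=\langle v,a\rangle$. Moreover the quadratic drift term $\nabla g(r)G^{-1}(r)D^2 g(r)(v,v)$ lies in the column span of $\nabla g(r)$, i.e.\ in the normal space $(T_{r}\mathcal M)^\perp$, and is therefore orthogonal to $v$. Consequently the inner product of $v$ with the \emph{entire} drift of the $v$-equation collapses to $\langle v,a(t,r,v)\rangle$, which by \eqref{eq:lineargrowth} is bounded by $\|v\|\,C_T(1+\|(r,v)\|)$. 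Together with $\|P(r)B\|\le\|B\|\le C_T(1+\|(r,v)\|)$, this is precisely the one-sided linear growth of the drift on $\mathcal M$: the potentially quartic contribution of the quadratic-in-$v$ term is annihilated in the energy balance.

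Next I would apply It\^o's formula to $\psi:=\|(r,v)\|^2$ and then to $\psi^2=\|(r,v)\|^4$ along the process stopped at $\tau_K:=\sigma_K\wedge T$. Using the cancellation above, the drift of $\psi^2$ is bounded by $C\,C_T^2(1+\psi^2)$, while the martingale part $M$ has quadratic variation controlled by $C\,C_T^2\int_0^{\cdot}(1+\psi)\,\psi^2\,\dl u$. Taking $\sup_{t\le T}$ and then expectations, I would estimate $\mathbb E\sup_{t\le T}|M(t\wedge\tau_K)|$ by the Burkholder--Davis--Gundy inequality, factor $\sup_{t\le T}\psi^2$ out of $\langle M\rangle^{1/2}$, and use Young's inequality to absorb a fraction $\tfrac14\mathbb E\sup_{t\le T}\psi(\cdot\wedge\tau_K)^2$ into the left-hand side; this absorption is legitimate because the stopped quantity is bounded by $K^4$. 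What remains is a Gronwall inequality for $h(t):=\mathbb E\sup_{s\le t}\psi(s\wedge\tau_K)^2$ of the form $h(t)\le C\bigl(\mathbb E\|(r(0),v(0))\|^4+C_T^2 t\bigr)+C\,C_T^2\int_0^t h(u)\,\dl u$, whose solution yields \eqref{ineq:sup} with a constant independent of $K$ (the extra powers of $T$ in \eqref{ineq:sup} being harmless, since $T\ge1$).

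Finally, Chebyshev's inequality gives $\mathbb P(\sigma_K\le T)\le h(T)/K^4$, and since the right-hand side is $K$-independent and tends to $0$, letting $K\to\infty$ yields $\mathbb P(\sigma\le T)=0$ for every $T$, hence $\mathbb P(\sigma=\infty)=1$; the bound \eqref{ineq:sup} then follows from the $K$-uniform estimate by Fatou's lemma. Uniqueness up to indistinguishability is inherited from Lemma~\ref{lem:local_solv}, as two global solutions must coincide on each $[\![0,\sigma_K)\!)$ and $\sigma=\infty$ almost surely. I expect the main obstacle to be the second step---isolating the orthogonality that converts the apparently quadratic drift into a one-sidedly linearly growing one---since without it the fourth-moment Gronwall argument cannot be closed; the subsequent localization, BDG, and Gronwall steps are comparatively routine, the only care being the Young-inequality absorption needed to keep the supremum inside the expectation.
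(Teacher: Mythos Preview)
Your proposal is correct and follows essentially the same strategy as the paper: the decisive step in both is the orthogonality cancellation $\langle v,\nabla g(r)G^{-1}(r)D^2g(r)(v,v)\rangle=0$ together with $\langle v,P(r)y\rangle=\langle v,y\rangle$, which reduces the energy balance to a one-sidedly linearly growing expression and permits a Gronwall argument on the stopped fourth moment, uniformly in~$K$. The only differences are technical: the paper applies It\^o's formula to $\|(r,v)\|^2$, then squares and controls the stochastic integral via Doob's maximal inequality and It\^o's isometry, whereas you work with $\|(r,v)\|^4$ directly and use Burkholder--Davis--Gundy with a Young-inequality absorption; and for non-explosion the paper argues by monotone convergence through an auxiliary variable~$\gamma_T$ rather than Chebyshev and Fatou---but these are interchangeable variants of the same localization-plus-Gronwall scheme.
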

\begin{proof}
Unless otherwise stated, $C>0$ denotes a finite constant which may change its value with every new appearance.
Let $T\geq 1$ and let the sequence of stopping times $(\sigma_K)_{K\in \mathbb N}$ be defined similarly as in Definition~\ref{definition:localsolution} by
\begin{align*}
\sigma_K=\inf\{t\in [0,\infty):\|(r(t)\|^2+\|v(t))\|^2\geq K^2\}
\end{align*}
where $\inf\emptyset=\infty$. The stopping time approximates the explosion time of $(r,v)$ from below, i.e., $\sigma_K<\sigma$ and $\sigma_K\to \sigma$ as $K\rightarrow \infty$ $\mathbb P$-almost surely. Consider 
$K\in \mathbb N$ fixed. Applying It\^o's formula, see, e.g., \cite[p.\ 243]{schilling:b:2014}, to $\|(r(t\wedge \sigma_K),v(t\wedge \sigma_K))\|^2$ yields
\begin{align*}
 \|(r(t\wedge \sigma_K),v(t\wedge \sigma_K))\|^2&=\|(r(0),v(0))\|^2+\int_0^t\mathbbm{1}_{[0,\sigma_K]}(u) 2\big\langle r(u),v(u)\big\rangle\dl u\\
 &+\int_0^t\mathbbm{1}_{[0,\sigma_K]}(u) 2\big\langle v(u),P(r(u))a(u,r(u),v(u))\big\rangle\dl u\\
 &+\int_0^t\mathbbm{1}_{[0,\sigma_K]}(u) 2\big\langle v(u),P(r(u))B(u,r(u),v(u))[\cdot]\big\rangle\dl w(u)\\
 &+\int_0^t\mathbbm{1}_{[0,\sigma_K]}(u) 2\big\langle v(u),\nabla g(r(u)) G^{-1}(r(u))D^2g(r(u))(v(u),v(u))\big\rangle\dl u\\
 &+\int_0^t\mathbbm{1}_{[0,\sigma_K]}(u) \operatorname{Tr}\bigl((P(r(u))B(u,r(u),v(u)))^\top(P(r(u))B(u,r(u),v(u)))\bigr)\dl u
\end{align*}
where $\operatorname{Tr}:\mathbb R^{dN\times dN}\to \mathbb R$ is the trace operator. Since $(r,v)$ is a local solution to the explicit SDE \eqref{eq:explicit_SDE}, we have, $\mathbb P$-almost surely, $v(u)\in T_{r(u)}\mathcal M$ for all $u<\sigma$. This yields, $\mathbb{P}$-almost surely, for all $u<\sigma$,
\begin{align}\label{eq:onesidedlineargrowth}
\begin{split}
\big\langle v(u),\nabla g(r(u))& G^{-1}(r(u))D^2g(r(u))(v(u),v(u))\big\rangle\\
&=\big\langle [\nabla g(r(u))]^\top v(u), G^{-1}(r(u))D^2g(r(u))(v(u),v(u))\big\rangle\\
&=\big\langle 0,G^{-1}(r(u))D^2g(r(u))(v(u),v(u))\big\rangle\\
&=0
\end{split}
\end{align}
and $\langle v(u),P(r(u)) y\rangle=\langle v(u),y\rangle$ for all $y\in\mathbb R^{dN}.$
Inserting these simplifications above and using the estimate $(\sum_{i=1}^{n}\phi_i)^2\leq n \sum_{i=1}^n \phi_i^2$ we obtain
\begin{align*}
 \mathbb E\sup_{t\leq T}\|(r(t\wedge \sigma_K),v(t\wedge \sigma_K))\|^4&\leq C\biggl( \mathbb E\|(r(0),v(0))\|^4+\mathbb E\sup_{t\leq T}\Bigl(\int_0^t\mathbbm{1}_{[0,\sigma_K]}(u) \big\langle r(u),v(u)\big\rangle\dl u\Bigr)^2\\
 &\quad+\mathbb E\sup_{t\leq T}\Bigl(\int_0^t\mathbbm{1}_{[0,\sigma_K]}(u) \big\langle v(u),a(u,r(u),v(u))\big\rangle\dl u\Bigr)^2\\
 &\quad+\mathbb E\sup_{t\leq T}\Bigl(\int_0^t\mathbbm{1}_{[0,\sigma_K]}(u) \big\langle v(u),B(u,r(u),v(u))[\cdot]\big\rangle\dl w(u)\Bigr)^2\\
 &\quad+\mathbb E\sup_{t\leq T}\Bigl(\int_0^t\mathbbm{1}_{[0,\sigma_K]}(u) \|P(r(u))B(u,r(u),v(u)))\|^2\dl u\Bigr)^2\biggr).
\end{align*}
The occurring scalar products and the stochastic integral can be estimated using the Cauchy-Schwarz inequality in $\mathbb R^{dN}$ and Doob's maximal inequality combined with It\^o's isometry.
Furthermore, for each $x\in M$, $P(x)$ is an orthogonal projection onto a linear subspace, in particular $\|P(x)\|\leq 1$. Applying also the Cauchy-Schwarz inequality for integrals, we obtain
\begin{align*}
 \mathbb E\sup_{t\leq T}\|(r(t\wedge \sigma_K),v(t\wedge \sigma_K))\|^4&\leq C\biggl( \mathbb E\|(r(0),v(0))\|^4+\mathbb E\sup_{t\leq T}\Bigl(T^2\int_0^t\mathbbm{1}_{[0,\sigma_K]}(u) \| r(u)\|^2\|v(u)\|^2\dl u\Bigr)\\
 &\quad+\mathbb E\sup_{t\leq T}\Bigl(T^2\int_0^t\mathbbm{1}_{[0,\sigma_K]}(u) \|v(u)\|^2\|a(u,r(u),v(u))\|^2\dl u\Bigr)\\
 &\quad+\mathbb E\Bigl(\int_0^T\mathbbm{1}_{[0,\sigma_K]}(u) \|v(u)\|^2\|B(u,r(u),v(u))\|^2\dl u\Bigr)\\
 &\quad+\mathbb E\sup_{t\leq T}\Bigl(T^2\int_0^t\mathbbm{1}_{[0,\sigma_K]}(u) \|B(u,r(u),v(u)))\|^4\dl u\Bigr)\biggr).
\end{align*}
As the integrands in the integrals w.r.t\ $\dl u$ are non-negative we enlarge the domain of integration to $[0,T]$. Applying the linear growth conditions~\eqref{eq:lineargrowth} leads to
\begin{align*}
 \mathbb E\sup_{t\leq T}\|(r(t\wedge \sigma_K),v(t\wedge \sigma_K))\|^4&\leq C\biggl( \mathbb E\|(r(0),v(0))\|^4+\mathbb E\Bigl( T^2\int_0^T\mathbbm{1}_{[0,\sigma_K]}(u) \| r(u)\|^2\|v(u)\|^2\dl u\Bigr)\\
 &\quad+\mathbb E\Bigl(C_T(T^2+1)\int_0^T\mathbbm{1}_{[0,\sigma_K]}(u) \|v(u)\|^2\bigl(1+\|r(u)\|^2+\|v(u)\|^2\bigr)\dl u\Bigr)\\
 &\quad+\mathbb E\Bigl(C_T^2T^2\int_0^T\mathbbm{1}_{[0,\sigma_K]}(u) \bigl(1+\|r(u)\|^2+\|v(u)\|^2\bigr)^2\dl u\Bigr)\biggr).
\end{align*} 
Note that $\mathbbm 1_{[0,\sigma_K]}(u)\|(r(u),v(u))\|^4\leq\|(r(u\wedge\sigma_K),v(u\wedge\sigma_K))\|^4$ and $\phi\leq \phi^2+1$ for arbitrary $\phi$.  Consequently, as $T\geq 1$ and $C_T\geq 1$, the estimate can be simplified to 
\begin{align*}
 \mathbb E\sup_{t\leq T}\|(r(t\wedge \sigma_K),v(t\wedge \sigma_K))\|^4&\leq C\biggl(\mathbb E\|(r(0),v(0))\|^4+C_T^2 T^3\\
&\quad+C_T^2 T^2\mathbb E\Bigl(\int_0^T \|(r(u\wedge\sigma_K),v(u\wedge\sigma_K))\|^4\dl u\Bigr)\biggr).
\end{align*} 
Moreover, $\mathbb E \int_0^T \|(r(u\wedge\sigma_K),v(u\wedge\sigma_K))\|^4\dl u \leq \int_0^T \mathbb E\sup_{u\leq T}\|(r(u\wedge\sigma_K),v(u\wedge\sigma_K))\|^4\dl t$ by Fubini's theorem. Therefore, by means of Gronwall's lemma we obtain
\begin{align}\label{ineq:estimatesupnorm}
 \mathbb E\sup_{t\leq T}\|(r(t\wedge \sigma_K),v(t\wedge \sigma_K))\|^4
&\leq C\bigl(\mathbb E\|(r(0),v(0))\|^4+ C_T^2 T^3\bigr)\operatorname{exp}\bigl(CC_T^2 T^3\bigr)=:(C_T')^2.
\end{align}
Note that the right hand side in the estimate \eqref{ineq:estimatesupnorm} is independent of $K$. By H\"older's inequality we obtain 
\begin{align}\label{ineq:estimatenormsimple}
\mathbb E\|(r(T\wedge \sigma_K),v(T\wedge \sigma_K))\|^2\leq\bigl(\mathbb E\|(r(T\wedge \sigma_K),v(T\wedge \sigma_K))\|^4\bigr)^{\frac{1}{2}}\leq C_T'.
\end{align}
Finally, $\mathbb P(\sigma=\infty)=1$ can be shown by using \eqref{ineq:estimatenormsimple} in the following setting:
Let $(\gamma_T)_{T\geq 0}$ be the family of random variables $\gamma_T:\Omega \to [0,\infty]$ defined by
\begin{align*}
 \gamma_T(\omega)=\begin{cases}
                   \|(r(T,\omega),v(T,\omega))\|^2 &\text{ for }T<\sigma(\omega)\\
		   \infty &\text{ else.}
                  \end{cases}
\end{align*} The measurability of these random variables follows directly from the measurability of $\sigma$ and of $\|(r(T),v(T))\|^2.$
Obviously, we have $\{\sigma\leq T\}=\{\gamma_T=\infty\}$.
The definition of $\sigma_K$ yields the following estimate for every $t<K$,
\begin{align*}
\gamma_T\wedge K\leq \|(r(T\wedge \sigma_K),v(T\wedge \sigma_K))\|^2,
\end{align*}
where $\infty\wedge K=K$. Consequently, 
\begin{align*}
 \mathbb E(\gamma_T)&= \mathbb E\Bigl(\lim_{K\to\infty}(\gamma_T\wedge K)\Bigr)=\lim_{K\to\infty}\mathbb E(\gamma_T\wedge K) \leq \lim_{K\to\infty}\mathbb E\bigl(\|(r(T\wedge \sigma_K),v(T\wedge \sigma_K)\bigr)\|^2)\leq C_T'
\end{align*}
can be concluded by means of the monotone convergence theorem. Note that $\mathbb E(\gamma_T)<\infty$ implies $\mathbb P(\gamma_T=\infty)=0$ and therefore $\mathbb P(\sigma\leq T)=0$ for all $T\geq 1$. Hence, we have shown that $\mathbb P(\sigma=\infty)=1$. Thus, $(r(t),v(t))_{t\in [0,\infty)}$ is the global solution of the SDE \eqref{eq:explicit_SDE} with $(r(t))_{t\in [0,\infty)}$ taking values in $\mathcal M$ and $(v(t))_{t\in [0,\infty)}$ taking values in the tangent space of the manifold. The uniqueness of the global solution follows from the uniqueness of the local solution. Since estimate \eqref{ineq:estimatesupnorm} is independent of $K$, we can show the bound \eqref{ineq:sup} for the global solution by help of monotone convergence,  
\begin{align*}
 \mathbb E\sup_{t\leq T}\|(r(t),v(t))\|^4&=\mathbb E\sup_{t\leq T}\|(r(t\wedge \sigma),v(t\wedge \sigma))\|^4=\lim_{K\to\infty}\mathbb E\sup_{t\leq T}\|(r(t\wedge \sigma_K),v(t\wedge \sigma_K))\|^4\\
&\leq C\bigl(\mathbb E\|(r(0),v(0))\|^4+ C_T^2 T^3\bigr)\operatorname{exp}\bigl(CC_T^2 T^3\bigr).
\end{align*}
\end{proof}

\begin{remark}\label{rem:onesided_growth}
The crucial observation \eqref{eq:onesidedlineargrowth} in the proof of Proposition~\ref{globalsolution} implies that the drift term in the explicit SDE~\eqref{eq:explicit_SDE} satisfies a one-sided linear growth condition on the constraint manifold in the sense that
\begin{align}\label{eq:onesidedlineargrowth2}
\begin{split}
\Bigl\langle \bigl(x,y\bigr),\bigl(y,P(x)a(t,x,y)+\nabla g(x)G^{-1}(x)D^2g(x)(y,y)\bigr)\Bigr\rangle\leq C_T \Bigl(1+\bigl\|(x,y)\bigr\|^2\Bigr)
\end{split}
\end{align}
whenever $x\in \mathcal M$, $y\in T_x\mathcal M$ and $t\in [0,T]$, where $C_T$ is the constant appearing in \eqref{eq:lineargrowth}. The strategy of our proof can be considered as a modification of the strategy for showing existence and uniqueness of global solutions to SDEs satisfying a \emph{global} one-sided linear growth assumption on the drift coefficient, see, e.g., \cite[Theorem 3.5]{mao:b:2007}.
\end{remark}

\section{Numerical Investigations}
\label{sec:numerics}
\setcounter{equation}{0} \setcounter{figure}{0}

The theoretical analysis of our manifold-valued SDE is complemented by numerical studies concerning the time discretization. 
Strong explicit and implicit schemes, their convergence rates and stability behavior are well-studied for stochastic differential equations whose drift and diffusion coefficients satisfy (local, one-sided) Lipschitz and growth conditions, see, e.g., \cite{hutzenthaler:p:2015, kloeden:b:1992, kloeden:b:1994} and references within. However, for constrained SDE of the type \eqref{eq:intro_SDE} we lack analytical results reported in literature. In the following we investigate the strong convergence order of an implicit Euler scheme and of an explicit projection-based method and discuss their applicability to the turbulence-driven fiber dynamics regarding accuracy and computational effort. Moreover, we comment on the numerical treatment of the underlying constrained stochastic partial differential equation.

\subsection{Time-integration schemes}

We consider a finite time interval $[0,T]$, partitioned into subintervals $[t^n, t^{n+1}]$, $n = 0,...,M-1$, of fixed length $\Delta t=T/M$. The numerical approximation to the solution at a time level $t^n$ is indicated by the respective index $^n$, e.g., $r^n\approx r(t^n)$. As possible time-integration schemes, we use two different approaches: (A) an implicit Euler scheme based on the original formulation of the manifold-valued SDE \eqref{eq:intro_semidisc}, (B) an explicit projection-based Euler scheme for the formulation with the explicit representation of the Lagrange multiplier \eqref{eq:explicit_SDE}. 

In (A) we treat $(\mu^n-\mu^{n-1})/\Delta t=\lambda^n$  as Lagrange multiplier to the constraint $g(r^n)=0$. Evaluating all terms in \eqref{eq:intro_semidisc} implicitly --except of the diffusion coefficient $B$--  leads to a nonlinear system of equations at every time level which is solved via a Newton iteration equipped with an Armijo-Goldstein line-search. The implicit scheme enforces $r^n\in \mathcal{M}$ at every time level $t^n$. To avoid numerical errors and ensure $v^n\in T_{r^n}\mathcal{M}$ within the computational accuracy, we additionally apply the strategy by Gear-Gupta-Leimkuhler \cite{gear:p:1985} that is well-known for differential algebraic equations: the hidden constraint $\nabla g(r)^\top v=0$ is included together with introducing a second Lagrange multiplier $\nu$ in the dynamic equation for $r$, i.e., $\mathrm{d}r= (v+ \nabla g(r) \nu) \,\mathrm{d} t$.

In (B) we proceed from $(r^n,v^n)\in \mathcal{M}\times T_{r^n}\mathcal{M}$. Integrating  \eqref{eq:explicit_SDE} explicitly yields a new position and velocity $(\hat r, \hat v)$ which neither fulfill the algebraic nor the hidden constraint exactly. To enforce both within computational accuracy, we first project $\hat{r}$ on $\mathcal{M}$ by solving 
$r^{n+1}= \hat{r} + \nabla g(r^{n+1}) \eta$, $g(r^{n+1}) = 0$
via a Newton iteration. Note that the occurring Lagrange multiplier $\eta$ has no relation to $\lambda$.  Then we project $\hat{v}$ orthogonally onto the tangent space, i.e., $v^{n+1}= P(r^{n+1}) \hat{v}$, $v^{n+1}\in T_{r^{n+1}}\mathcal{M}$. The explicit time integration is computationally cheaper than the implicit one, but stability issues require a step size restriction. The scheme is stabilized without further costs by means of a semi-implicit modification where the dynamical equations for $r$ and $v$ are sequentially solved and $\hat{v}$ is determined using $r^{n+1}$ in the drift terms. 

Aiming for speed-up and reduction of the computational costs we investigate two further variants of the presented schemes.  The performance of the implicit scheme (A) depends strongly on the initial guess in the Newton iteration for which usually the solution of the previous time level and $\lambda=0$ are taken. By using instead an explicit Euler step as estimator we propose a predictor-corrector strategy in the spirit of a WIGGLE-algorithm \cite{lee:p:2005} known from molecular dynamic simulations. Additionally we explore the accuracy of the explicit scheme (B) when the projections onto $\mathcal{M}\times T_r\mathcal{M}$ are only applied after a number of time steps. 

\subsection{Performance study -- convergence and costs}

\begin{figure}[t]
 \centering
\includegraphics[scale=0.45]{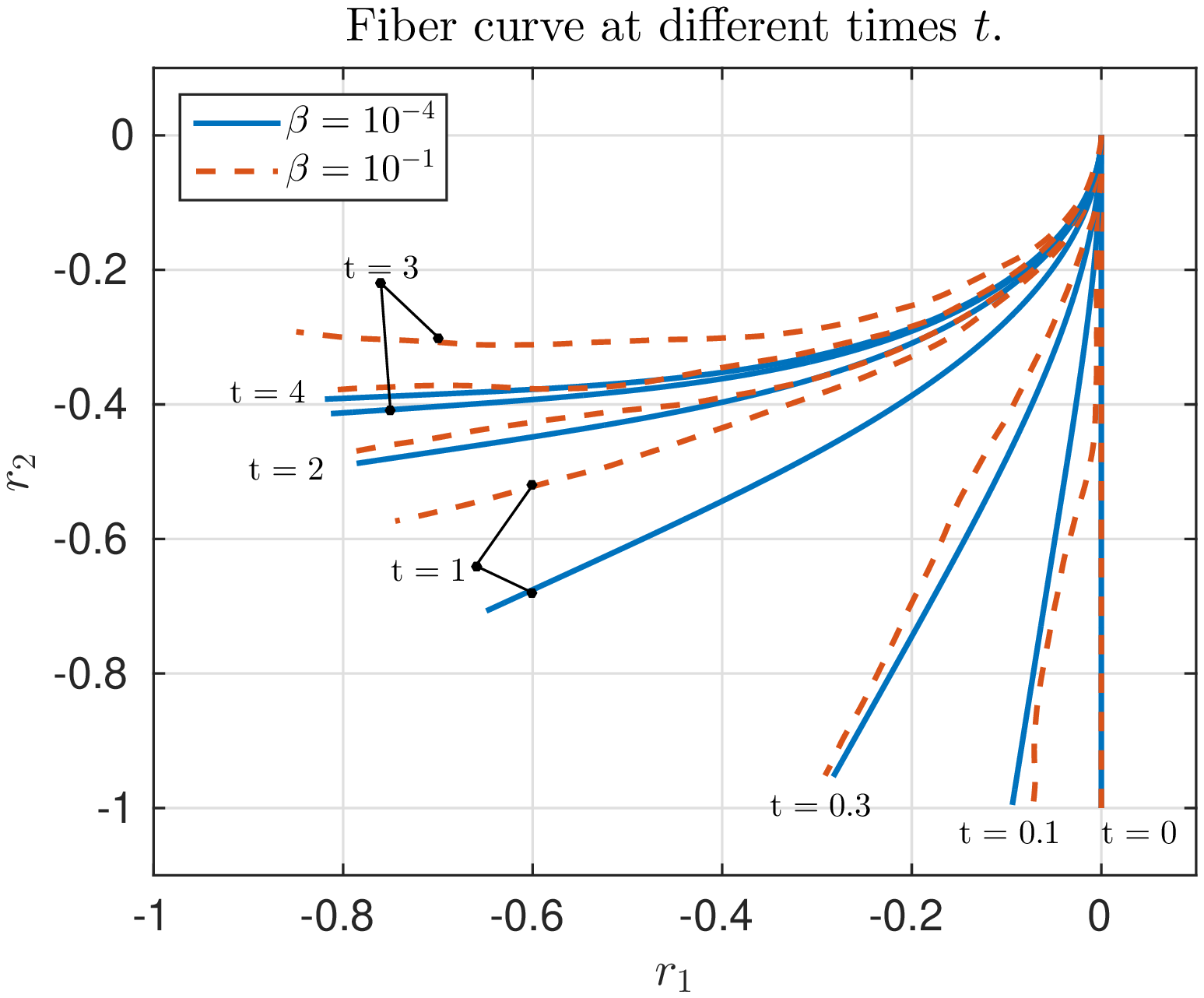}~\includegraphics[scale=0.45]{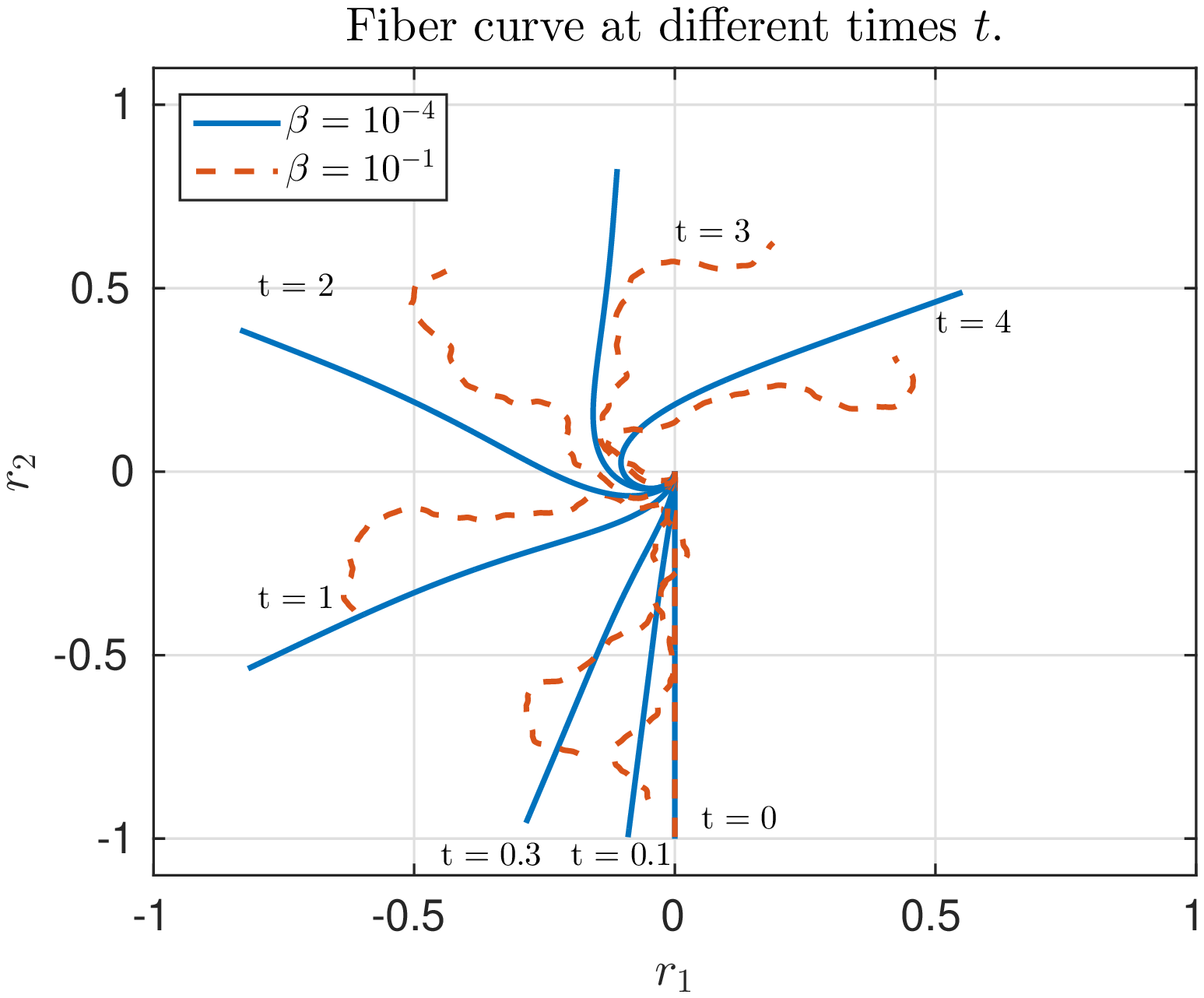}\\
\includegraphics[scale=0.45]{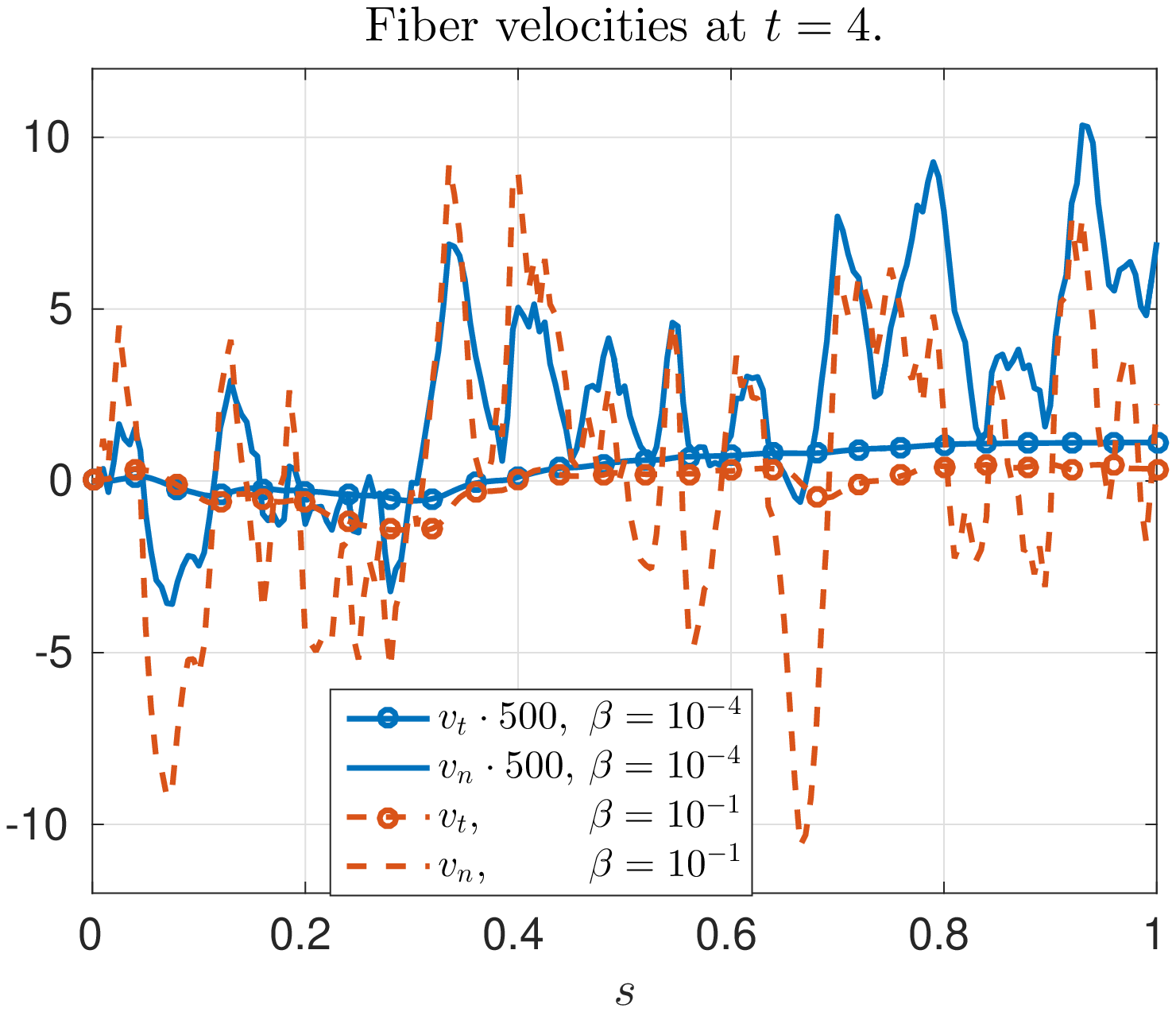}~\includegraphics[scale=0.45]{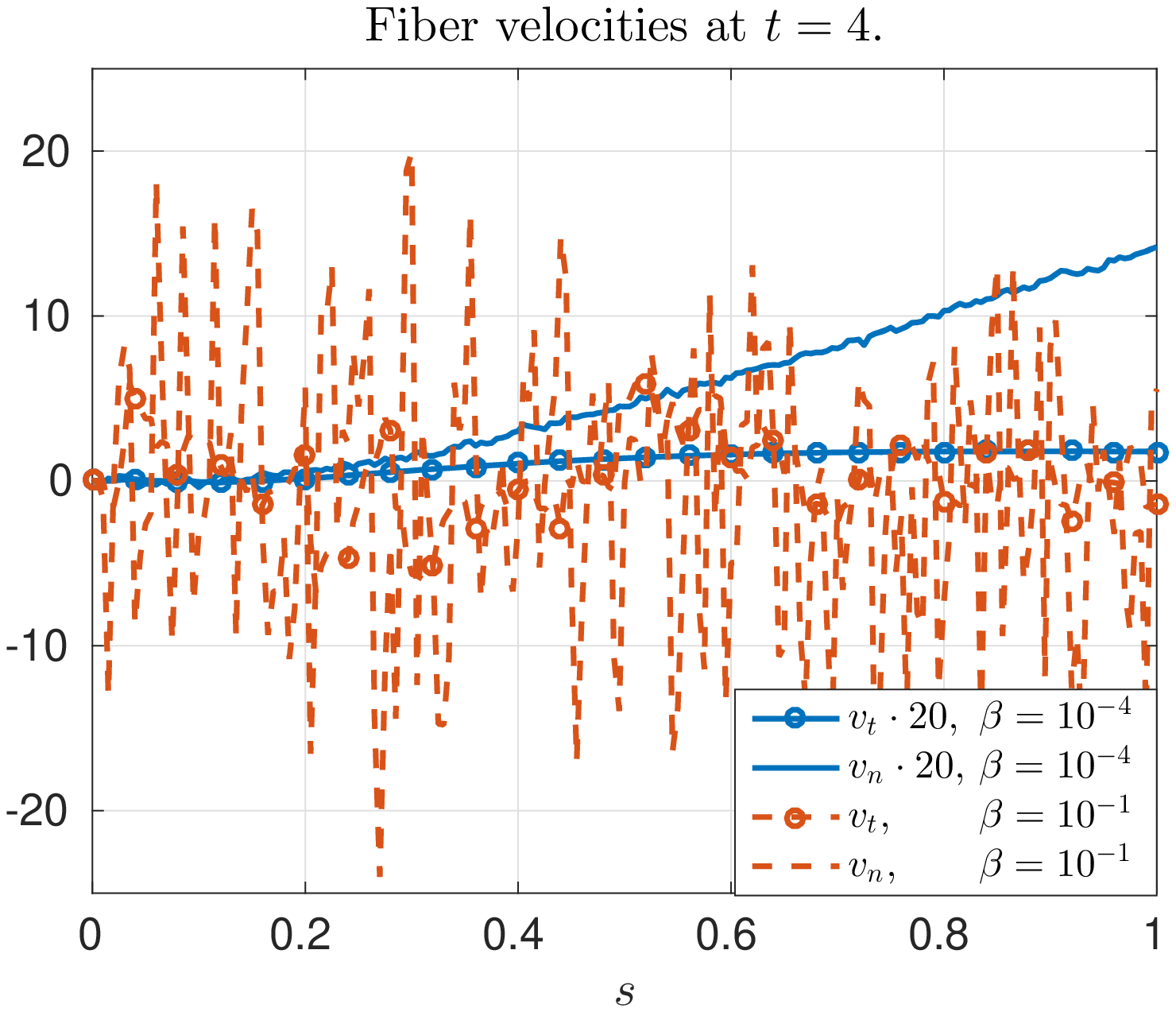}
\caption{\small Fiber curve and velocity for $\alpha=0.4$ ({\em left}) and $\alpha=4$ ({\em right}), computed with $(\Delta s, \Delta t) = (5\cdot 10^{-3}, 2.5\cdot10^{-7})$ using scheme (A). The velocity is split into the tangential and normal parts, i.e., $v_t=\mathbf{v}^\top\partial_s\mathbf{r}$ and $v_n=\mathbf{v}^\top\mathbf{n}$ with $\mathbf{n}\perp \partial_s\mathbf{r}$. Note that the velocity associated with $\beta=10^{-4}$ is scaled by a constant factor $c=500$ ({\em left}), $c=20$ ({\em right}).}
\label{fig:dynamics}
\end{figure}

For the performance study of the numerical schemes we consider the fiber dynamics in a two-dimensional set-up ($\mathbf{e_1}$-$\mathbf{e_2}$-plane with gravitational direction $\mathbf{e_g}=-\mathbf{e_2}$). The fiber is clamped at one end in the origin and hangs initially straight in direction of gravity, i.e., $\mathbf{r}(s,0)=-s\mathbf{e_2}$, $s\in[0,\ell=1]$, before its free end is excited into motion by a stationary rotational flow field $\mathbf{u}(\mathbf{x},t)\equiv\mathbf{u}(\mathbf{x})=x_2\mathbf{e_1}-x_1\mathbf{e_2}$. For simplicity, the drag operators $\mathbf{C}$, $\mathbf{D}$ are set to be identity. Moreover, the Froude and drag numbers are exemplarily chosen as $(\mathrm{Fr},\mathrm{Dr}) =(3, 0.1)$. 

The effects of bending stiffness and turbulent intensity on the fiber motion are illustrated in Fig.~\ref{fig:dynamics}, i.e., $\alpha\in\{0.4,4\}$, $\beta\in\{10^{-4},10^{-1}\}$. As expected, a higher turbulent fluctuation number $\beta$ implies higher fluctuations in the velocity and a distinct crimp of the fiber curve. The smaller $\alpha$, the more pronounced is the bending stiffness. In the following we fix $\alpha=0.4$.

\begin{figure}
\centering
\includegraphics[scale=0.45]{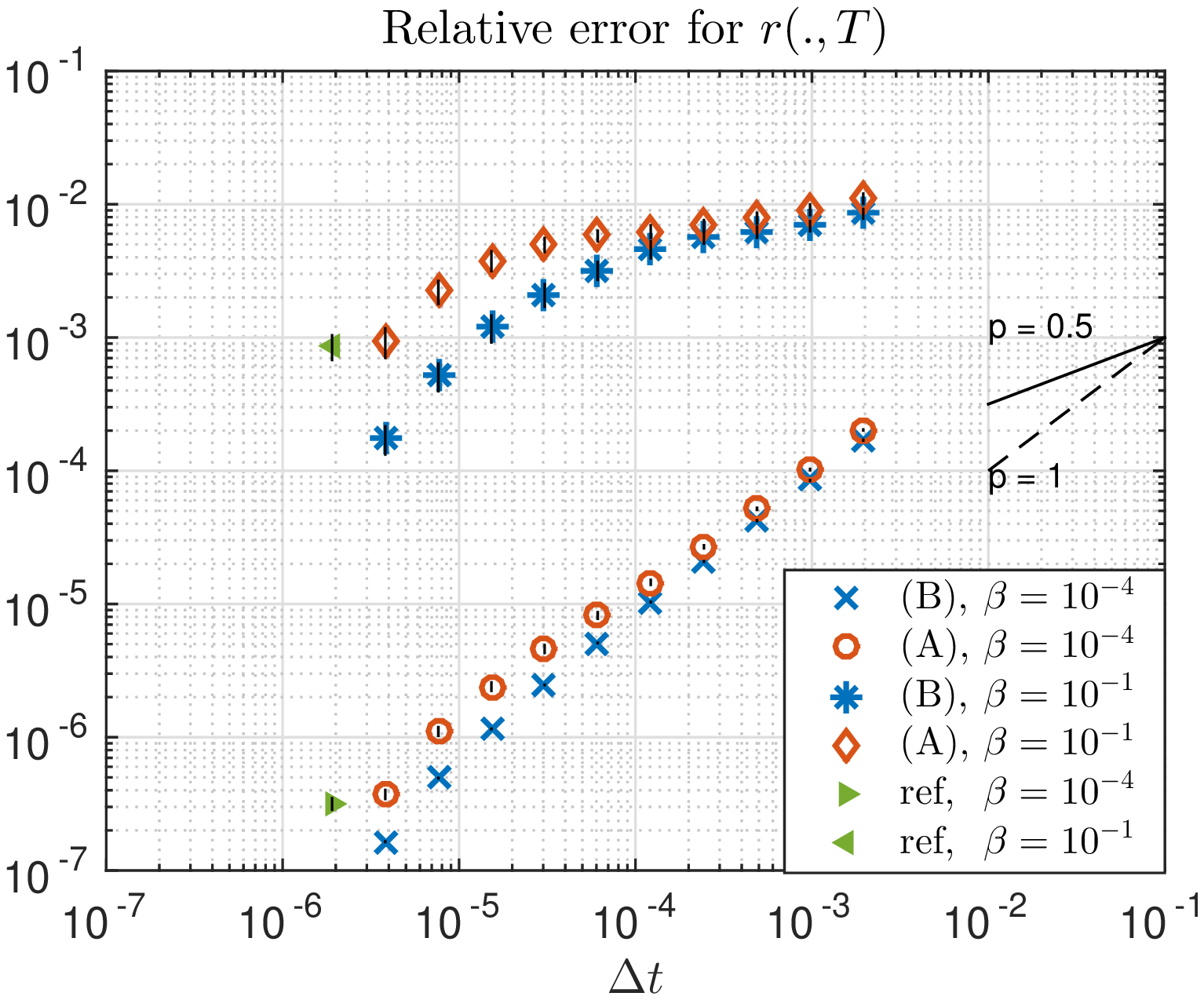}~\includegraphics[scale=0.45]{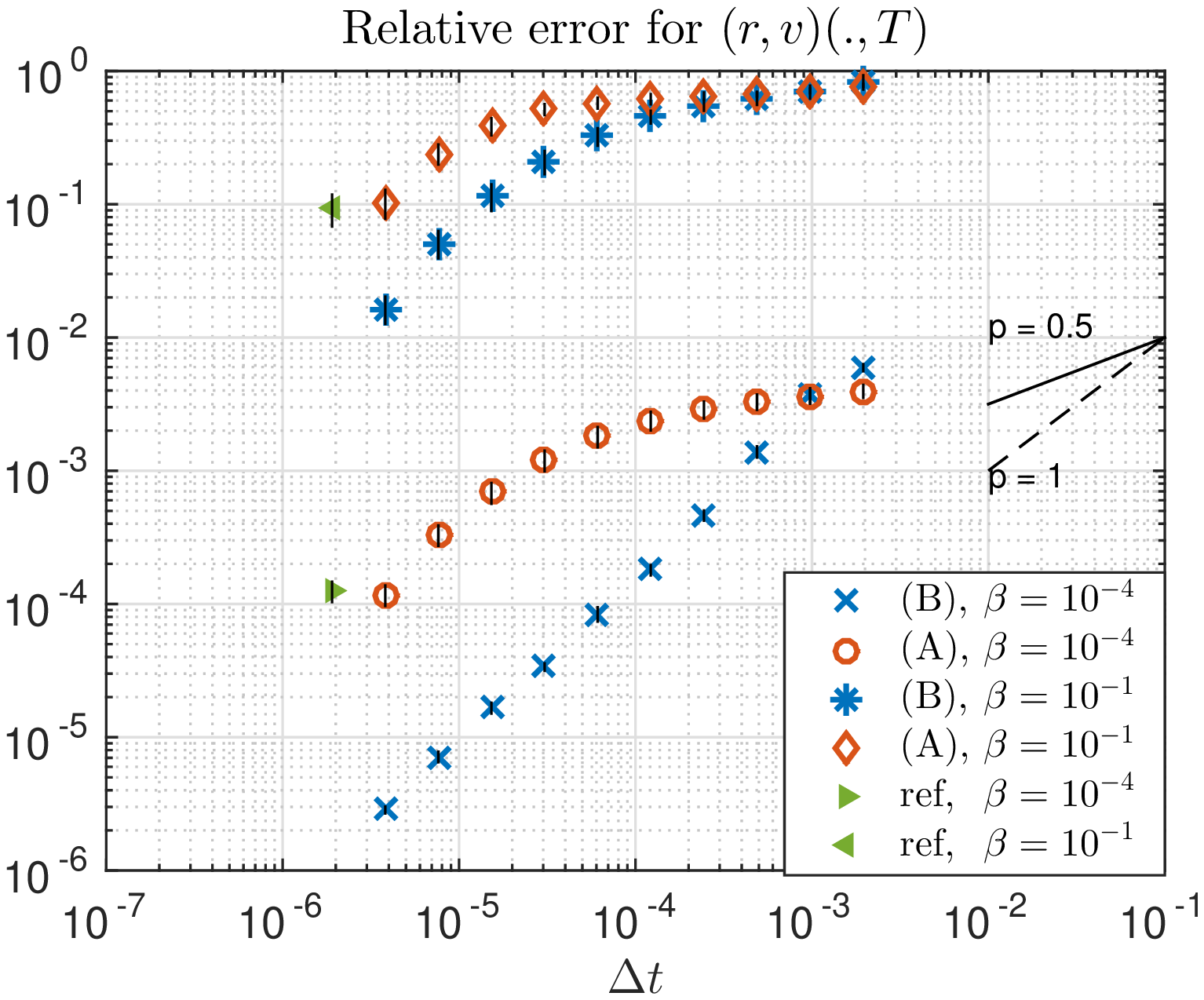}
\caption{\small Convergence of polygonal chain for $\Delta t \rightarrow 0$, fixed $\Delta s = 0.125$, $T=0.25$. The mean relative errors are shown for implicit (A) and explicit (B) schemes. The distance between the two corresponding reference solutions is marked by \textit{ref}.}
 \label{fig:polychainConv}
\end{figure}

\begin{figure}
\centering
\includegraphics[scale=0.45]{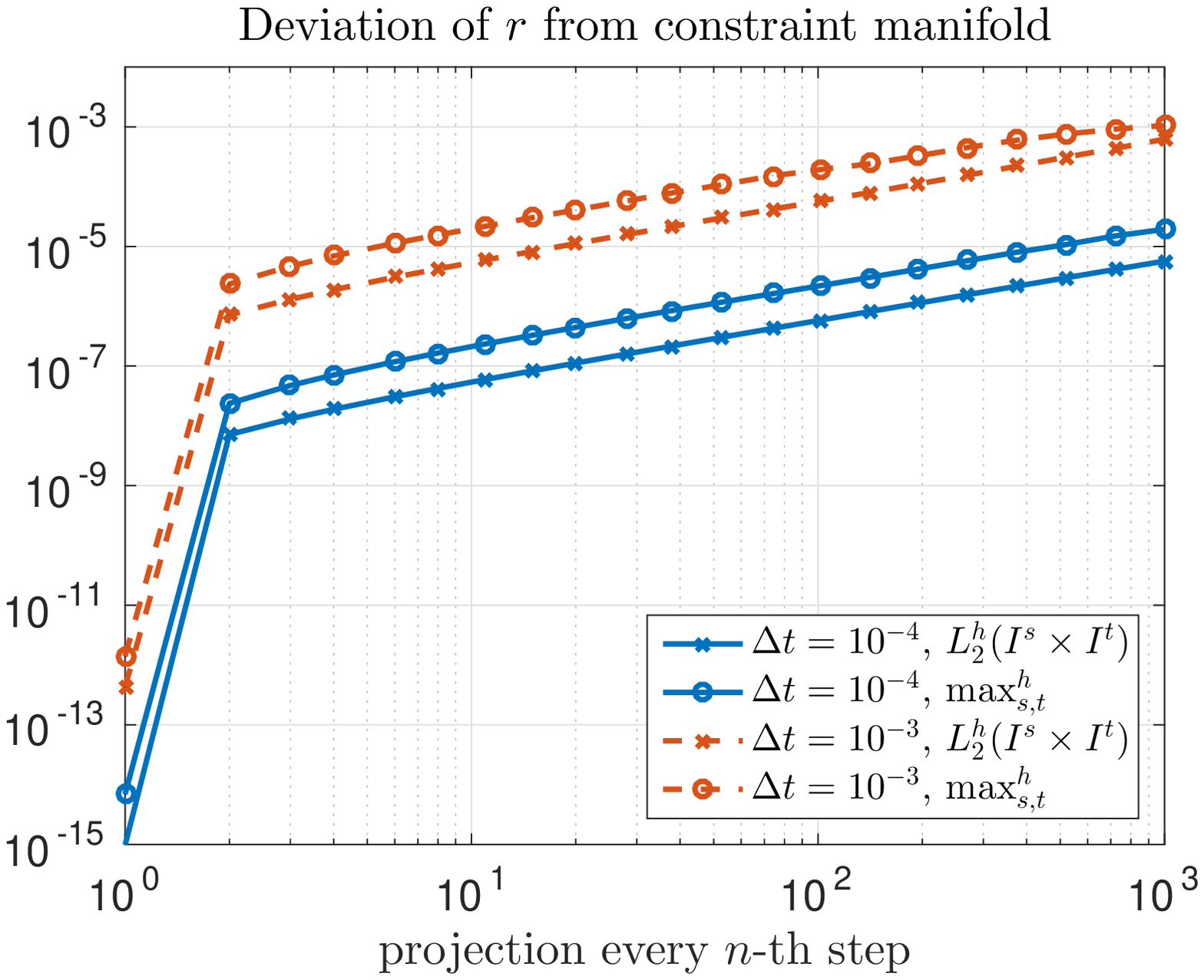}~\includegraphics[scale=0.45]{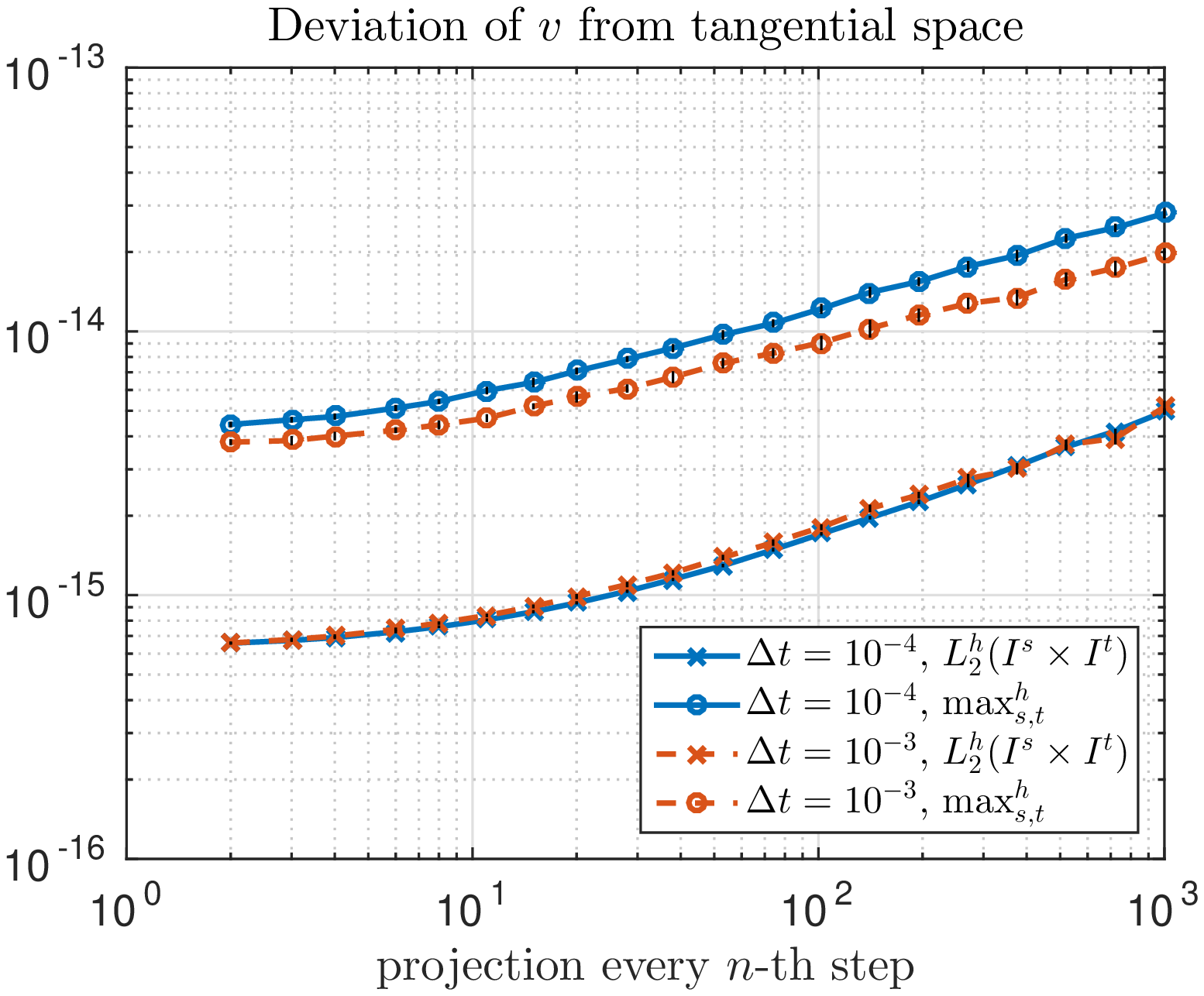}\\
\includegraphics[scale=0.45]{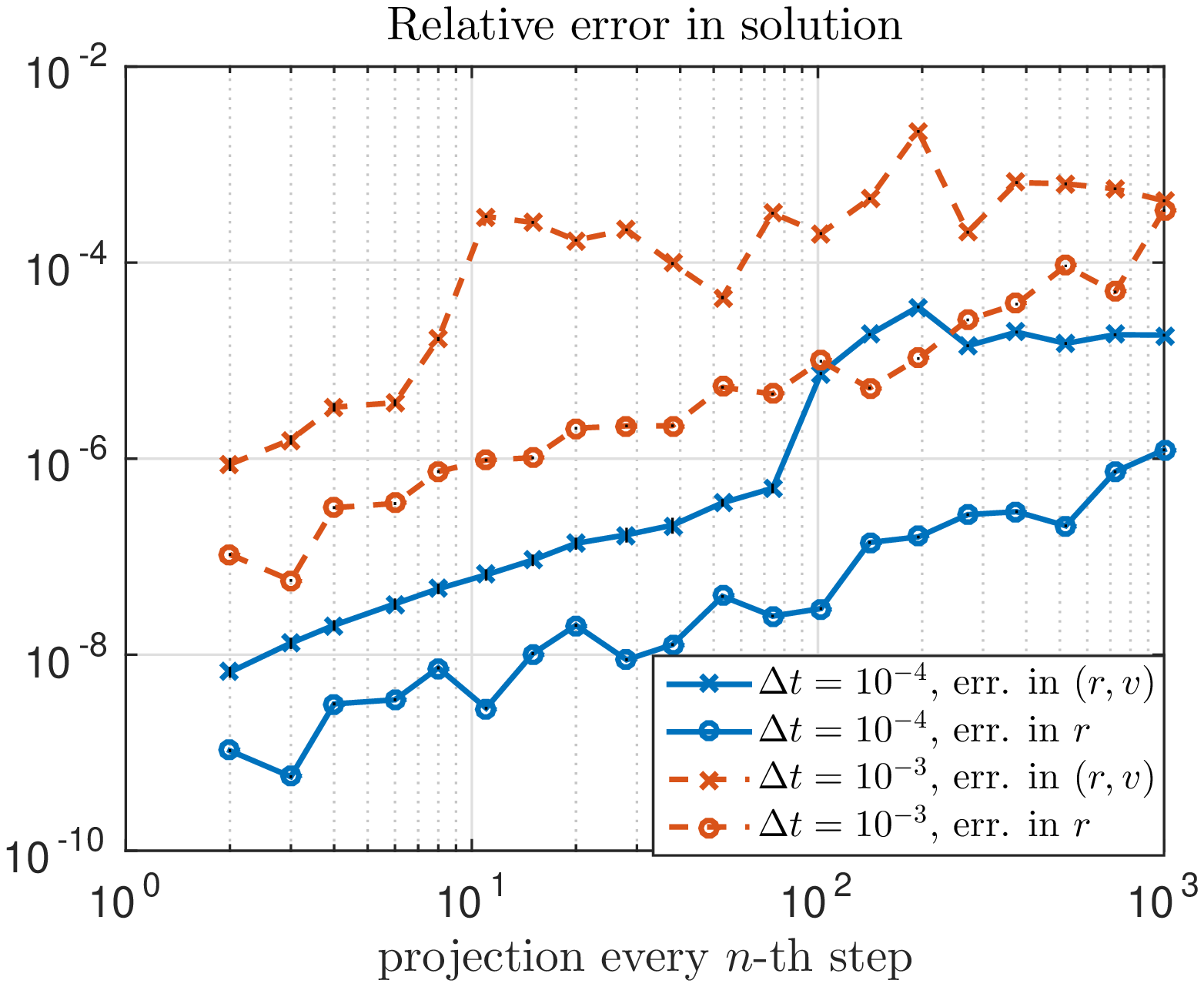}~\includegraphics[scale=0.45]{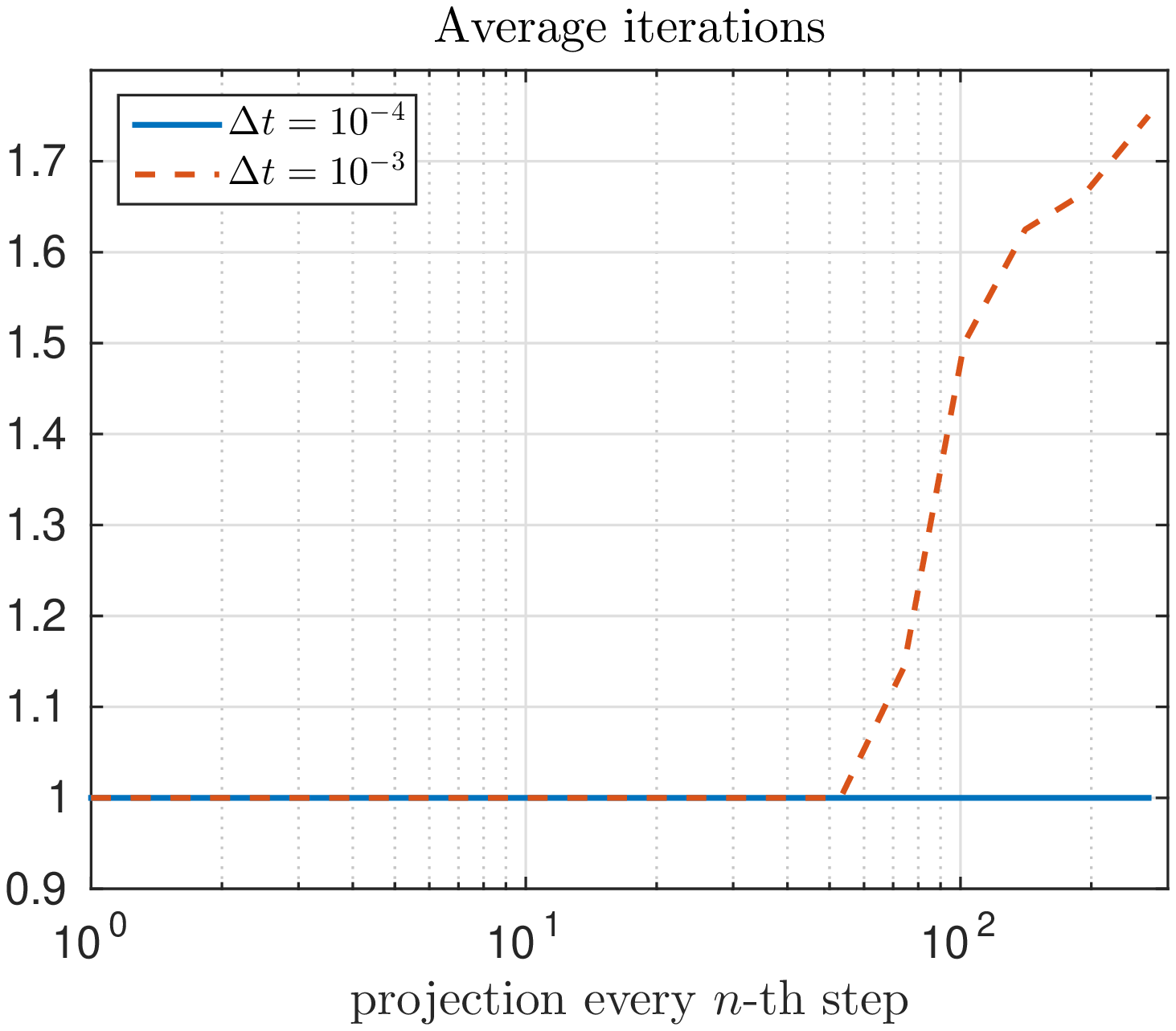}
 \caption{\small Influence of projection-skipping for $n$ steps in the explicit scheme. {\em Top}: mean error in the length constraint $\|\|\partial_s\mathbf{r}\|^2-1\|_{L_2^h(I^s\times I^t)}$ ({\em left}); mean deviation of velocity from $T_r\mathcal M$, $\|\partial_s\mathbf{r}^\top\partial_s\mathbf{v}\|_{L_2^h(I^s\times I^t)}$ ({\em right}). {\em Bottom}: mean error in the solution at $t=T$ ({\em left}), average number of Newton iterations for the $r$-projection ({\em right}). $\Delta s = 0.125$, $T=1$ and $L_2^h(I^s\times I^t)$ discrete approximation of the $L_2$-norm on $[0,\ell]\times[0,T]$.}
 \label{fig:skippingError}
\end{figure}

To study the strong numerical convergence of the two proposed integration schemes, we analyze the relative error $\|(z-z_{ref})(T)\|_{L_2^h(I^s)}/\|z_{ref}(T)\|_{L_2^h(I^s)}$ for $z\in\{r,(r,v)\}$, where the discrete approximation of the $L_2$-norm reads as $\|z\|_{L_2^h(I^s)}=(\sum_{i=1}^N\|\mathbf z_i\|^2\Delta s)^{1/2}$ with Euclidean norm $\|\cdot\|$. Due to the lack of an analytical solution, we calculate the reference solution, denoted by index $._{ref}$, by help of the respective scheme under consideration. The mean behavior is obtained by averaging a sample of 100 independent simulations, the confidence level of the presented results is set to $90\%$. Figure~\ref{fig:polychainConv} indicates a strong convergence rate of order $p\approx1$ for the explicit projection-based and the implicit schemes in both, position and velocity for turbulent numbers $\beta\in\{10^{-4},10^{-1}\}$. Furthermore, we note that an increase of $\beta$ results in a raise in the magnitude of the mean relative error for both schemes (A) and (B). The distance between the two reference solutions shows good agreement of the schemes, see Fig.~\ref{fig:polychainConv}. The implicit scheme requires the solving of a nonlinear system with a Jacobian of size $((2d+2)(N+4))^2$ in every time step, where $d$ denotes the space dimension $d\in\{2,3\}$. In comparison, only a tridiagonal linear system of size $N^2$ is solved in an explicit step. The additional projections for $r$ onto the constraint manifold $\mathcal M$  and for $v$ onto the tangential space $T_r\mathcal{M}$ involve the solving of a nonlinear system with a Jacobian of size $((d+1)(N+4))^2$ and of a linear system with Gram matrix $G$ of size $N^2$, respectively. The corresponding computational time is visualized as fraction of the time needed for a Newton step in the implicit scheme, see Fig.~\ref{fig:avgNewtonEstimator} (right). In total the explicit projection-based scheme shows a computational effort up to $\approx70\%$ less than the implicit scheme in every time step, but is subjected to a step size restriction due to stability issues.

In the stability region we can even speed up the explicit scheme (B) by skipping the projections of $r$ and $v$ for $n$ time steps, $n\in \mathbb{N}$. Certainly this leads in general to an increase of the numerical errors in the constraints and also in the computed solution. Additionally the intended speed up might be repealed due to a dramatical increase of the needed Newton iterations in the projection step of $r$ onto $\mathcal M$. However, as Fig.~\ref{fig:skippingError} indicates, the mean errors grow relatively slow with $n$, and the average amount of needed Newton iterations is less than 2 even if the projections are suspended for over 100 time steps.

\begin{figure}[t]
\centering
\includegraphics[scale=0.45]{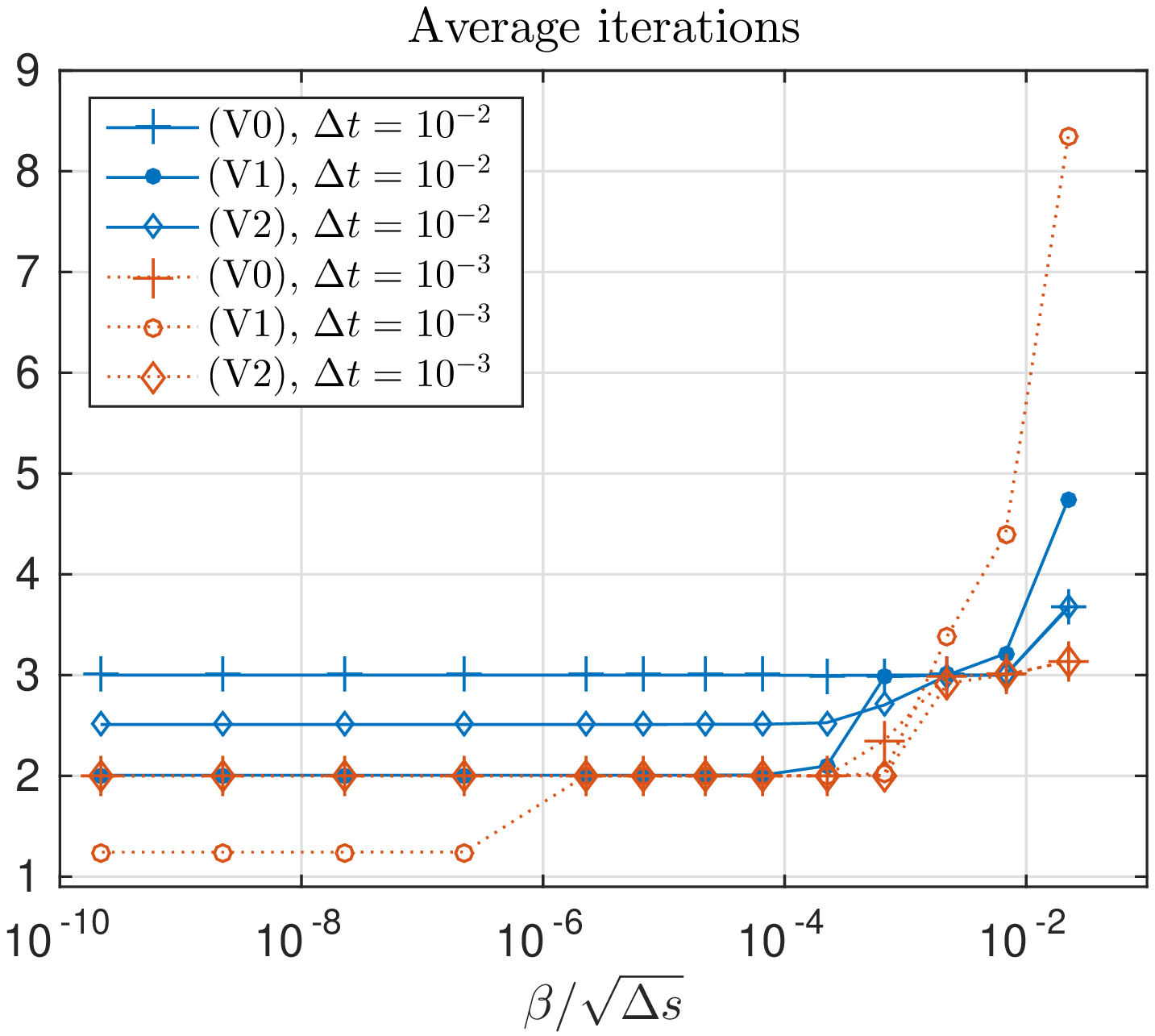}~\includegraphics[scale=0.45]{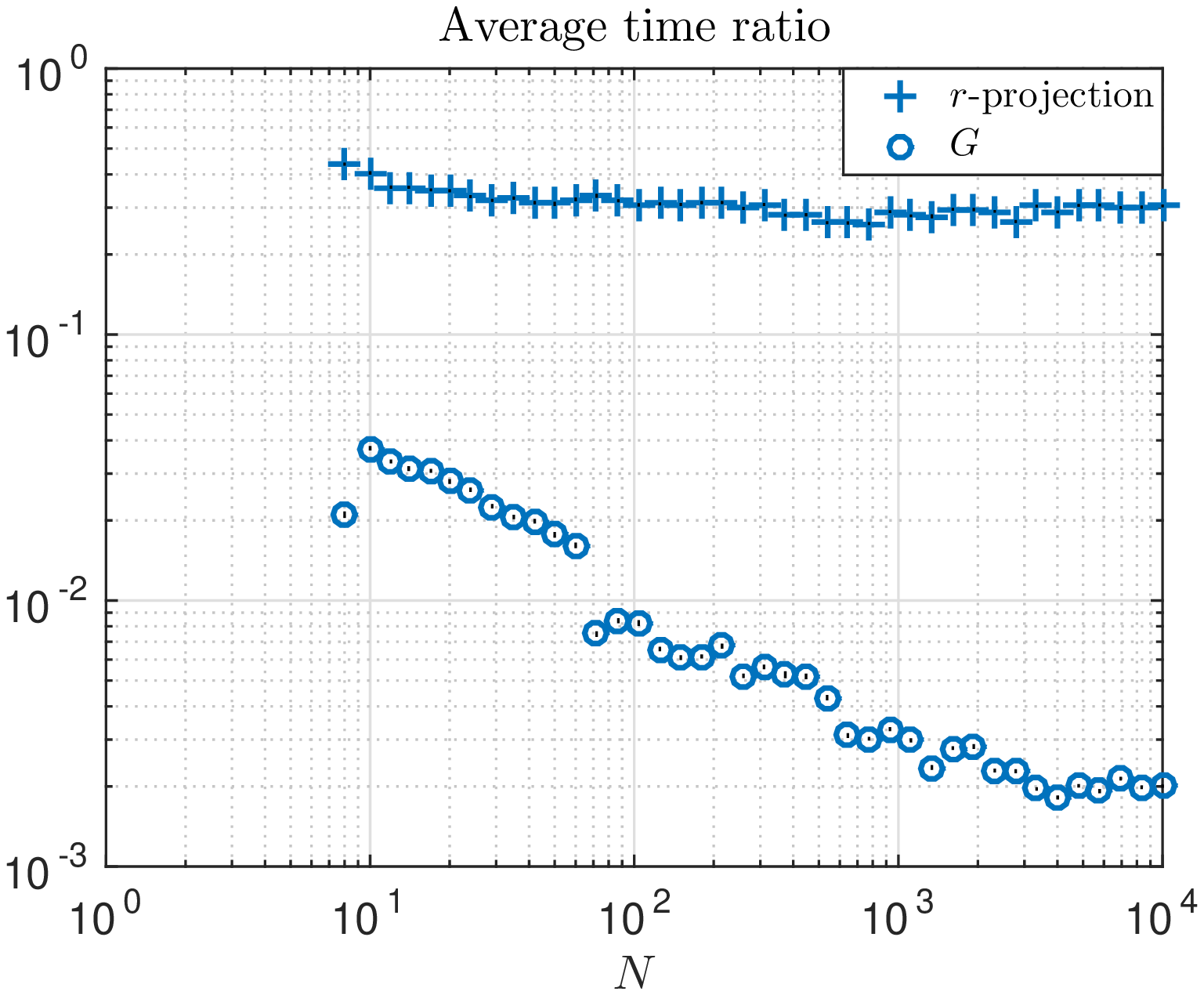}
 \caption{\small {\em Left:} Average of Newton iterations w.r.t.\ effective turbulence number for different initial guesses of the implicit solution. $T=3$, $N = 20$. {\em Right}: Average computation time of the linear systems (associated with the Gram matrix $G$ and the $r$-projection) as fraction of time needed for a Newton step in scheme (A) in dependence of the number of spatial grid points $N$.}
 \label{fig:avgNewtonEstimator}
\end{figure}

The implicit scheme (A) is applicable for larger step sizes. Its computational performance can be improved by using the explicit Euler step as initial guess for the Newton method. We particularly forego the projections to avoid unnecessary computations. We explore two different variants (V1) and (V2) of this approach. In (V1) we estimate the complete tuple $(r,v,\lambda)$, in (V2) only the Lagrangian multiplier $\lambda$ whereas position and velocity are taken from the old time level. The impact of the initial guess on the Newton iterations needed in average depends on the effective turbulence number $\beta/\sqrt{\Delta s}$, as it can be seen in Fig.~\ref{fig:avgNewtonEstimator} (left). For small stochastic influence, (V1) results in a reduction of Newton iterations ($\approx1$) which goes with less costs due to the significant size differences of the respective system matrices as already discussed. For high turbulence numbers or in the case $\Delta s\ll \Delta t$, we observe a disastrous estimation of the velocity. This leads to even more Newton iterations than in the standard case (V0) where the fixed point iteration is initialized with $r,v$ of the old time level and $\lambda=0$. The reason lies in the stability behavior of the explicit scheme. In comparison, (V2) is robust to the effective turbulence number, but not as efficient as (V1) for small stochastic influence.

\section{Conclusion}
\setcounter{figure}{0}

For the dynamics of a slender, elastic, inextensible fiber in turbulent flows we investigated a spatially discrete surrogate model. It was formally deduced from a continuous space-time Kirchhoff beam model by help of a finite volume approach, and rigorously interpreted as an It\^o-type SDE with a nonlinear algebraic constraint. We proved existence and uniqueness of a global solution and proposed two numerical schemes for the manifold-valued stochastic differential system in time. The proof of the existence and uniqueness result was based on an explicit representation of the Lagrange multiplier, a detailed analysis of the occurring explicit drift-coefficient, and a Gronwall-type argument, showing that the lifetime of the local solution is infinite with probability one. The analytical investigation motivated the introduction of an explicit projection-based Euler-type scheme which is subjected to a time step restriction, but --in the stability region-- more efficient than a respective implicit scheme. The performance of the implicit scheme which is generally applicable --also for larger time steps-- can be improved for small stochastic effects by means of a predictor-corrector strategy where an explicit Euler step is used as initial guess.  Both schemes show numerically a strong convergence rate of order $p\approx1$ for the model problem. The numerical behavior of the spatially discrete surrogate model in the limit  $\Delta s=\Delta t\to0$ indicates a strong convergence of the continuous space-time model: a convergence rate of order $p=1$ can be observed, cf.\ Fig.~\ref{fig:coupledConv}. However, an analytical result for existence and uniqueness of solutions for $\Delta s\to0$, i.e., a solution theory for the constrained stochastic partial differential fiber model, is still a challenge for future research. 

\begin{figure}[t]
\centering
\includegraphics[scale=0.45]{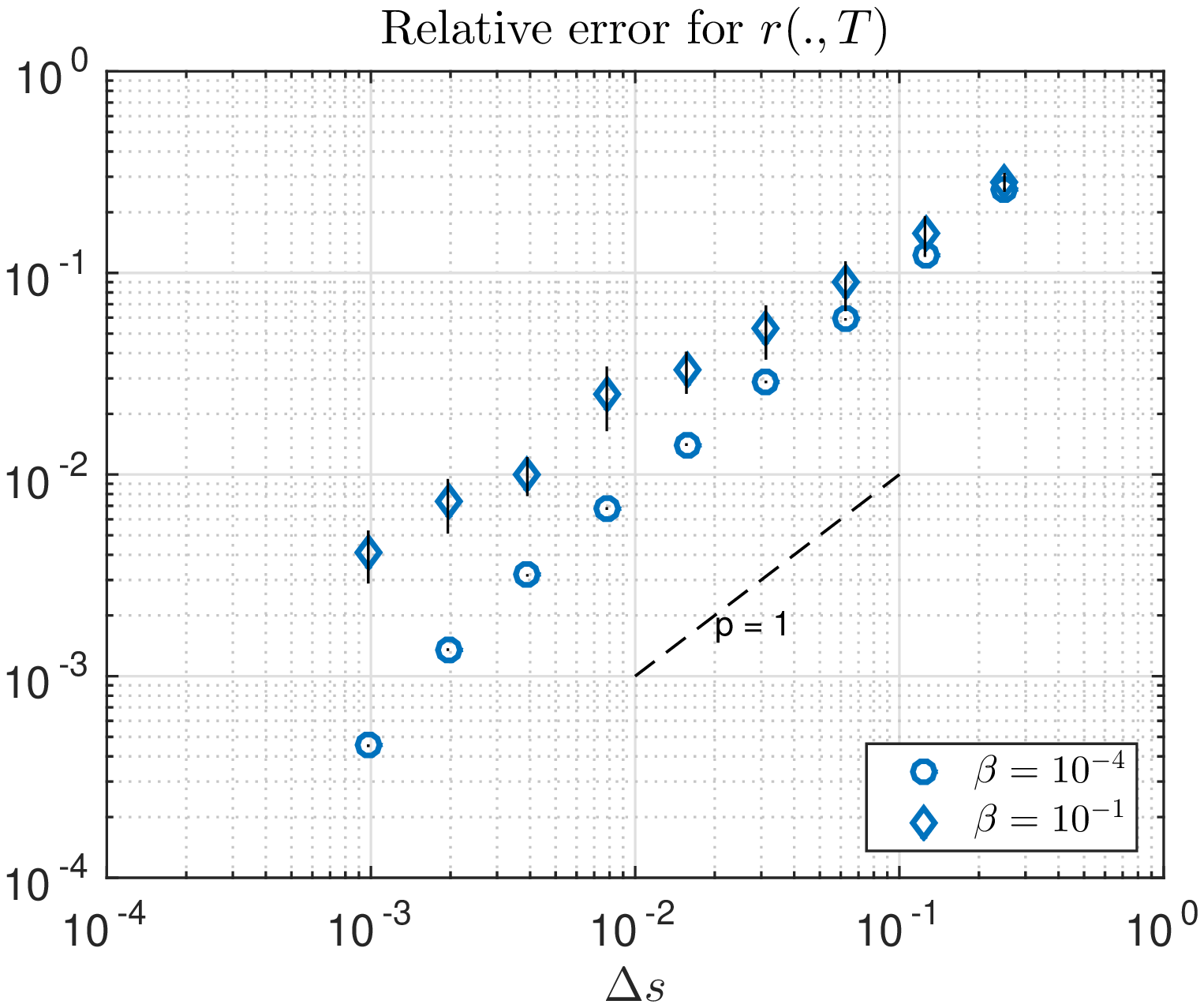}~\includegraphics[scale=0.45]{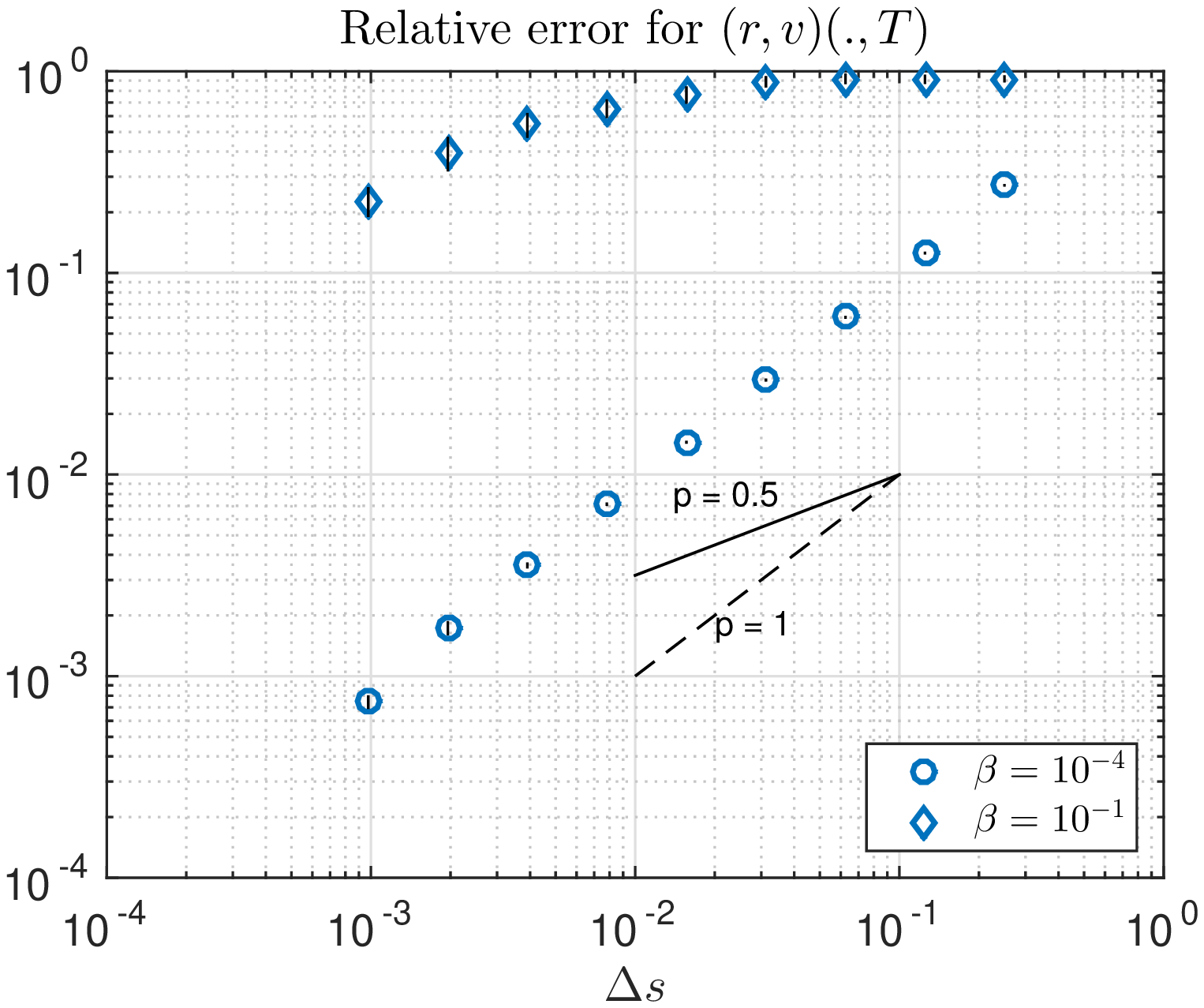}
\caption{\small Space-time convergence for $\Delta t=\Delta s\rightarrow 0$, computed with the finite volume approach in combination with the implicit time scheme (A), $T=2.5$, sample of 20 simulations.}
 \label{fig:coupledConv}
\end{figure}

\subsection*{Acknowledgement}
The support by the German Bundesministerium f\"ur Bildung und Forschung (BMBF) and the Deutsche Forschungsgemeinschaft (DFG) is acknowledged (Project OPAL 05M13, Project MA 4526/2-1, WE 2003/4-1).


\end{document}